\renewcommand{\bibnamedash}{\leavevmode\raise3pt\hbox to3em{\hrulefill}\space}
\date{Novembre 2022}
\title{Pointwise Ergodic Theory: Examples and Entropy}
\author{Ben Krause}
\address{King's College London \\
Mathematics Department\\
Strand, London WC2R 2LS}
\email{ben.krause@kcl.ac.uk}
\newcommand{\M}{\mathcal{M}}
\newcommand{\F}{\mathcal{F}}
\theoremstyle{theorem}
\newtheorem{heuristic}[defi]{Heuristic}
\newcommand{\coloneqq}{\mathrel{\mathop:}=}
\newcommand{\eqqcolon}{=\mathrel{\mathop:}}
\begin{document}

\maketitle

\section*{Overview}
Pointwise ergodic theory, the motivation for discrete harmonic analysis, has at its roots the classical theorem of Birkhoff \cite{[2]}, which can be described as follows:

\begin{displayquote}
For every ergodic ---that is, \enquote{sufficiently randomizing}--- measure-preserving transformation, $\tau$, of a probability space, $(X,\mu)$, and any integrable function $f \in L^1(X,\mu)$, $\mu$-almost surely, one can recover the mean of $f$ by considering the Ces\'{a}ro sums
\[ \frac{1}{N} \sum_{n \leq N} f(\tau^nx) \to \int_X f \ d\mu \ \mu-\text{a.e.}\]
\end{displayquote}
Informally, this theorem says that one can recover the \emph{spatial mean} of $f$, 
\[ \int_X f \ d\mu,\]
 by considering the \emph{temporal means} 
\[ \big\{ \frac{1}{N} \sum_{n\leq N} f(\tau^nx) \big\},\]
 formed by \enquote{sampling} the function $f$ at the \enquote{times} $\{\tau^nx\}$ and taking the appropriate average.\footnote{Even in the case when $\tau$ is not ergodic, the temporal means $\big\{ \frac{1}{N} \sum_{n\leq N} \tau^n f(x) \big\}$ still converge $\mu$-almost everywhere.}

A classical question in pointwise ergodic theory concerned the almost-everywhere existence of limiting behavior of averages
\begin{align}\label{H-e:means}
\frac{1}{N} \sum_{n=1}^N \tau^{a_n} f
\end{align}
where $\{a_n\}$ is \enquote{sparse}; as is custom, here and throughout we use $\tau^k f$ to denote the function
\begin{align*}
x \mapsto f(\tau^k x).
\end{align*}
When the lower density of the sequence $\{ a_n\}$ is bounded away from zero
\begin{align*}
\liminf \frac{|\{ n : a_n \leq N\}|}{N} > 0,
\end{align*}
convergence is readily exhibited, and the classical question concerned the existence of sequences $\{ a_n \}$ with zero density,
\begin{align*}
\lim \frac{|\{ n : a_n \leq N\}|}{N} = 0,
\end{align*}
for which the averages \eqref{H-e:means} converged almost everywhere. In \cite{[2]}, such a sequence was constructed; it consisted of taking long blocks of natural numbers, followed by much longer gaps, followed by slightly longer blocks, followed by even longer gaps, etc. In particular, this sequence had an upper Banach density of $1$
\begin{align*}
d^*(\{ a_n \}) \coloneqq  \limsup_{|I| \to \infty \text{ an interval}} \frac{|\{ a_n \} \cap I|}{|I|} = 1.
\end{align*}
The question remained, however, whether or not there existed upper Banach density-zero sequences, $\{ a_n \}$ with $d^*(\{a_n \}) = 0$, for which the almost-everywhere convergence of the averages \eqref{H-e:means} could be proved. In particular, the classical question, explicitly posed first by Furstenberg \cite{[16]}, see also \cite{[1]}, was whether or not the averages along the squares
\begin{align*}
\frac{1}{N} \sum_{n=1}^N \tau^{n^2} f
\end{align*}
converged pointwise almost everywhere, initially for $f \in L^2(X)$. In
breakthrough work, \cite{[5], [6], [9]}, Bourgain answered this question affirmatively, and proved the almost everywhere convergence of \eqref{H-e:means} for any polynomial sequence, 
\[ \{ a_n = P(n) \}, \; \; \; P \in \mathbb{Z}[ \cdot ], \]
and any $f \in L^p(X), \ p >1$, for any $\sigma$-finite measure space $X$; this result was later proven to be sharp \cite{[11], [25]}.

\begin{theo}\label{t:BPoly}
Suppose that $(X,\mu)$ is a $\sigma$-finite measure space, $\tau: X \to X$ is a measure-preserving transformation, and $P \in \mathbb{Z}[\cdot]$ is a polynomial with integer coefficients. Then for each $1 < p < \infty$
\[ \frac{1}{N} \sum_{n=1}^N \tau^{P(n)} f \]
converges $\mu$-a.e.
\end{theo}

Although the issue of pointwise convergence is {qualitative}, Bourgain's insight was to {quantify} the rate at which convergence occurred -- and then to use an abstract {transference} argument first due to Calder\'{o}n \cite{[12]} to deduce these quantitative estimates from a single \enquote{universal} measure preserving system. By considering sequences of the form
\[ \mathbb{Z} \ni n \mapsto \tau^n f(x), \; \; \; x \in X \text{ fixed} \]
and using the measure-preserving nature of $\tau$, Bourgain was able to reduce matters to proving estimates in the case of the integers with counting measure and the shift $(\mathbb{Z}, |\cdot|, \tau: x \mapsto x-1)$.

In particular, Bourgain was after quantitative estimates on the oscillation of the averaging operators
\begin{align}\label{e:foreword-squares1}
 \frac{1}{N} \sum_{n =1}^{N} f(x - P(n)),
 \end{align}
applied first to $\ell^2(\mathbb{Z})$-functions. A natural perspective on \eqref{e:foreword-squares1} is as a convolution of $f$ and
\[ K_N(x) \coloneqq  \frac{1}{N} \sum_{n=1}^N \delta_{P(n)}(x) \]
where $\delta_m$ denotes the point-mass at $m \in \mathbb{Z}$; as this problem is $\ell^2(\mathbb{Z})$-based, the Fourier transform method is naturally employed, and the key to the analysis is an understanding of the exponential sums
\[ \frac{1}{N} \sum_{n \leq N} e^{- 2\pi i \beta \cdot P(n)},\]
which is accomplished via the \emph{circle method} from analytic number theory; the interplay between the \enquote{soft} analytic issue of pointwise convergence and \enquote{hard} analytic estimates on the integers/Euclidean space via analytic-number-theoretic means is characteristic of the fields of pointwise ergodic theory and discrete harmonic analysis.

I first came to understand Bourgain's work by reading \cite{[36]}, which I think explains Theorem \ref{t:BPoly} beautifully; the goal of these notes is to complement \cite{[36]} by trying to explain the motivation behind Bourgain's argument. 

Accordingly, for the sake of clarity, we will shift our focus slightly from proving Theorem \ref{t:BPoly}, and will instead focus on the related maximal estimate, in the representative case of $L^2(X)$.

\begin{theo}\label{t:BMax}
Suppose that $(X,\mu)$ is a $\sigma$-finite measure space, $\tau: X \to X$ is a measure-preserving transformation, and $P \in \mathbb{Z}[\cdot]$ is a polynomial with integer coefficients. Then there exists an absolute constant $\mathbf{C}$, independent of $(X,\mu,\tau)$, so that
\[ \| \sup_N \Big|\frac{1}{N} \sum_{n=1}^N \tau^{P(n)} f \Big| \|_{L^2(X)} \leq \mathbf{C} \cdot \| f\|_{L^2(X)}.\]
\end{theo}

By Calder\'{o}n's transference principle, Theorem \ref{t:BMax} follows from the analoguous estimate of the integers: if we define
\begin{align}\label{e-MAX-1}
\mathscr{M} f(x) \coloneqq  \sup_N \Big| \frac{1}{N} \sum_{n=1}^N f(x- P(n)) \Big|,
\end{align}
then our focus turns to establishing the following estimate
\begin{theo}\label{2-MAINGOAL}
For any $P \in \mathbb{Z}[\cdot]$, the following norm inequality holds: there exists an absolute constant $\mathbf{C}$ so that 
\begin{align*}
\| \mathscr{M}f \|_{\ell^2(\mathbb{Z})} \leq \mathbf{C} \cdot \|f\|_{\ell^2(\mathbb{Z})}.
\end{align*}
\end{theo}

Below, following the lead of \cite{[36]}, we will restrict to the case where 
\[ P(n) = n^d, \]
as this eliminates some number-theoretic technicality while still capturing the essence of the problem.

\subsubsection{Notation}
Here and throughout we abbreviate the complex exponential $e(t) \coloneqq  e^{2 \pi i t}$, so that we may express the Fourier transform in Euclidean space, and on the integers, respectively as
\begin{align*} \hat{f}(\xi) &= \int_{\mathbb{R}} f(x) \cdot e(-\xi x) \ dx, \; \; \; g^{\vee}(x) = \int_{\mathbb{R}} g(\xi)  \cdot e(\xi x) \ d\xi \\
\hat{f}(\beta) &= \sum_n f(n)  \cdot e(-\beta n), \; \; \; g^{\vee}(n) = \int_{\mathbb{T}} g(\beta)  \cdot e(\beta n) \ d\beta. \end{align*}
We will let
\begin{align*} \phi_k(t) \coloneqq  2^{-k} \cdot \phi(2^{-k} \cdot t) \end{align*}
denote the usual $L^1$-normalized dyadic dilations, and for frequencies $\theta$, we let
\begin{align}\label{e-mod} \text{Mod}_\theta g(x) \coloneqq  e(\theta x) \cdot g(x) \end{align}
so that
\[ \widehat{\text{Mod}_\theta g}(\beta) = \hat{g}(\beta - \theta),\]
and recall the Hardy--Littlewood Maximal operator
\begin{align*}
M_{\text{HL}} f(x) \coloneqq  \sup_{r > 0} \, \frac{1}{2r} \int_{-r}^{r} |f(x-t)| \ dt \; \; \; \text{ or } \; \; \; \coloneqq  \sup_{N \geq 0} \, \frac{1}{2N+1} \sum_{n= -N}^{N} |f(x-n)|;
\end{align*}
although we use the same notation to refer to both continuous and discrete maximal operator, it will be clear from context which formulation we use.

We will let $[N] \coloneqq  \{1,\dots,N\}$, and abbreviate $\sum_{n \leq N} \coloneqq  \sum_{n=1}^N$. We will use the symbol $\mathbf{c}$ to denote suitably small constants, which remain bounded away from zero, and $\mathbf{C}$ to denote suitably large constants, which remain bounded above. If we need these constants to depend on parameters, we use subscripts, thus $\mathbf{c}_d$ is a constant that is small depending on $d$. We use $X = O(Y)$ to denote the statement that $|X| \leq \mathbf{C} \cdot Y$, and analogously define $X = O_d(Y).$

Finally, we will use the heuristic notation
\[ f \; \; \; \textrm{``}=\textrm{''} \; \; \; g \]
to denote moral equivalence: up to tolerable errors, $f$ and $g$ exhibit the same type of behavior.

\section{Discrete Complications}

Before beginning our discussion of Theorem \ref{2-MAINGOAL}, let us explain why we might expect this to be a challenging problem.

For problems with a \enquote{linear} flavor, the discrete theory essentially mirrors the continuous theory
\[ \sup_r \frac{1}{r} \int_0^r |f(x-t)| \ dt \; \; \; \textrm{``}=\textrm{''} \; \; \; \sup_N \frac{1}{N} \sum_{n=1}^N |f(x-n)| \]
as can be seen by experimenting with functions of the form $F(\lfloor x \rfloor)$ and using dilation invariance of the real-variable maximal function to reduce attention to real variable functions that are constant on unit scales.

The problems become dramatically more complicated once linearity is destroyed. In this case, we consider the simple example of the Hardy--Littlewood maximal function along the curve $t \mapsto t^d$. The continuous maximal function
\begin{align}\label{e-M2}
M f \coloneqq  M_df &\coloneqq  \sup_r \big| \frac{1}{r} \int_0^r f(x - t^d) \ dt \big| = \sup_r \big| \frac{1}{r} \int_0^{r^d} f(x-t) \ \frac{1}{d t^{1-1/d}} \ dt\big|,
\end{align}
is just a weighted version of $M_{\text{HL}}$ via the pointwise majorization
\begin{multline}\label{e-convmhl}
\frac{1}{r} \int_0^{r^d} |f(x-t)| \frac{1}{d t^{1-1/d}} \ dt \leq \sum_{j = 1}^{\infty} 2^{-j/d} \cdot \big( \frac{2^{j/d}}{r}  \int_{2^{-j} \cdot r^d }^{2^{1-j} \cdot r^d} |f(x-t)| \frac{1}{d t^{1-1/d}} \ dt \big)   \\
\leq \mathbf{C}/d \cdot \sum_{j = 1}^{\infty} 2^{-j/d} \cdot \big( \frac{2^j}{r^d} \int_{2^{-j} \cdot r^d}^{2^{1-j} \cdot r^d} |f(x-t)| \ dt \big) \leq \mathbf{C}/d \cdot \sum_{j=1}^{\infty} 2^{-j/d} \cdot M_{\text{HL}} f(x)\\ \leq \mathbf{C} \cdot M_{\text{HL}} f(x).
\end{multline}
On the other hand, no such trick is available in the study of 
\[ \mathscr{M} f(x) \coloneqq  \mathscr{M}_d f(x) \coloneqq  \sup_N \Big| \frac{1}{N} \sum_{n \leq N} f(x-n^d) \Big|,\]
due to the presence of a smallest scale -- there is no real analogue for an infinitesimal change of variables in the discrete setting.

Passing to the Fourier side actually highlights this difference. We can express both~$M$ and~$\mathscr{M}$ as a maximal operator taken over a lacunary sequence of Fourier multipliers, after exploiting non-negativity. Let us begin with $M$:
\[ Mf(x) \coloneqq  \sup_k \left| \big( V_k(\xi) \hat{f}(\xi) \big)^{\vee}(x) \right|,\]
where
\begin{align}\label{2-V}
V_k(\xi) &\coloneqq  \int_{0}^1 e(-\xi 2^{dk} t^d) \ dt,
\end{align}
so that
\begin{align}\label{2-V0} V_k(\xi) = \int_{0}^1 e( - \xi 2^{dk} t) \frac{1}{d t^{1-1/d}} \ dt  \eqqcolon  \widehat{\mu}( 2^{dk} \xi) =
\begin{cases}
   1 + O \big(2^{dk} |\xi| \big) \\
   O \big( (2^{dk} |\xi|)^{-1/d} \big),
  \end{cases}
  \end{align}
as can be seen by Taylor expanding the exponential around the origin and using the principle of stationary phase (cleverly integrating by parts) for the second estimate. Above, we set
\begin{align}\label{e:mu} \mu(t) \coloneqq  \mu_d(t)  \coloneqq  \frac{1}{d t^{1-1/d}} \cdot \mathbf{1}_{(0,1]}.
\end{align}

What this analysis says is that the multipliers $V_k$ try very hard to look like $\widehat{\varphi_{dk}}$ for, say, a Schwartz function $\varphi \geq 0$ with $\widehat{\varphi}(0)=1$, as in this case, one has similar estimates:
\begin{align}
\widehat{\varphi_{dk}}(\xi) =
\begin{cases}
   1 + O \big(2^{dk} |\xi| \big) \\
   O \big( (2^{dk} |\xi|)^{-100} \big)
  \end{cases}
  \end{align}
(say); compare to \eqref{2-V}. Now, by replacing the weaker $\ell^\infty_k$-norm of $\{ (\mu_{dk} - \varphi_{dk})*f) \}$ with the stronger $\ell^2_k$-norm, we arrive at
\begin{align}\label{2-e:SFNARG}
 Mf &\leq \sup_k |\varphi_{dk}*f| + \sup_k  |(\mu_{dk} - \varphi_{dk} ) * f| \nonumber \\
 & \qquad \leq \mathbf{C} \cdot  M_{\text{HL}} f + \big( \sum_k
| (\mu_{dk} - \varphi_{dk}) * f |^2 \big)^{1/2}  \nonumber \\
& \qquad \qquad \eqqcolon  \mathbf{C} \cdot M_{\text{HL}} f + S f,
\end{align}
where $Sf$ is a so-called \emph{square function}, which is highly-tailored to study $L^2$-based problems. Indeed, we use Plancherel to bound
\begin{align}\label{e-sfxnarg}
\| Sf \|_{L^2(\mathbb{R})}^2 &= \| \big( \sum_k | (\mu_{dk} - \varphi_{dk})*f |^2 \big)^{1/2} \|_{L^2(\mathbb{R})}^2 = \sum_k \| (\mu_{dk} - \varphi_{dk})*f \|_{L^2(\mathbb{R})}^2 \nonumber \\
& \qquad = \sum_k \| (V_k - \widehat{ \varphi_{dk}}) \cdot \hat{f} \|_{L^2(\mathbb{R})}^2 = \int \sum_k |V_k(\xi) - \widehat{\varphi_{dk}}(\xi)|^2 \cdot |\hat{f}(\xi)|^2 \ d\xi \nonumber \\
& \qquad \qquad \leq \sup_\xi \ \sum_k |V_k(\xi) - \widehat{\varphi_{dk}}(\xi)|^2 \cdot \| \hat{f} \|_{L^2(\mathbb{R})}^2 \\
& \qquad \qquad \qquad \leq \mathbf{C} \cdot \sup_\xi \ \sum_k \min \{ 2^{kd} |\xi|,(2^{kd} |\xi|)^{-1/d} \}^2 \cdot \| f \|_{L^2(\mathbb{R})}^2 \nonumber \\
& \qquad \qquad \qquad \qquad \leq \mathbf{C}_d \cdot \| f \|_{L^2(\mathbb{R})}^2, \nonumber
\end{align}
using the fact that $\widehat{\varphi_{dk}}(\xi)$ satisfies the same estimates as $V_k$, namely \eqref{2-V}, so that for $|\xi| \leq \mathbf{C} \cdot 2^{-dk} $
\[ V_k(\xi) - \widehat{\varphi_{dk}}(\xi) = \big( 1 + O(2^{dk}|\xi|) \big) - \big( 1 + O(2^{dk}|\xi|) \big) = O(2^{dk}|\xi|)\]
and when $|\xi| > \mathbf{C} \cdot 2^{-dk}$
\[ V_k(\xi), \ \widehat{\varphi_{dk}}(\xi) = O( (2^{dk} |\xi|)^{-1/d} ).\]

If we try the same trick with the discrete operator $\mathscr{M}$,
\[ \mathscr{M} f(x) = \sup_k |K_k*f(x)|\]
where
\begin{align}\label{2-e:Kk}
K_k(x) \coloneqq  \frac{1}{2^k} \sum_{n \leq 2^k} \delta_{n^d}(x), \end{align}
we can similarly express $\mathscr{M}$ as a maximal multiplier operator
\[ \mathscr{M}f(x) = \sup_{k \geq 0} | \big( \widehat{K_k}(\beta) \hat{f}(\beta) \big)^{\vee}(x) |,\]
where the multipliers $\widehat{K_k}$ are of a different form than the $\{V_k\}$:
\[ \big\{ \widehat{K_k}(\beta) \coloneqq  \frac{1}{2^k} \sum_{m\leq 2^k} e( - \beta m^d)  \big\}_{k\geq 0}.\]
Each multiplier is a \emph{Weyl sum}, and requires the so-called circle method of Hardy and Littlewood to analyze. As we will see below, each multiplier
\[  \widehat{K_k}(\beta) \]
is large and interesting whenever $\beta$ is \enquote{$k$-close} to a rational number with a \enquote{$k$-small} denominator, i.e.\ $\beta$ lives in a so-called \enquote{$k$-major arc}, and is \enquote{$k$-negligible} otherwise, when $\beta$ lives in the complementary \enquote{$k$-minor arc.} In particular, we see subtle \emph{arithmetic} issues that arise as we seek to analyze the relevant multipliers; contrast this to the Euclidean situation, where we were able to understand the multipliers purely according to the \emph{magnitude} of the frequency variable. In other words, whereas the analysis in the Euclidean setting is entirely dictated by the distance from the frequency variable to the distinguished zero-frequency -- multi-frequency issues arise as we seek to understand the multipliers $\widehat{K_k}(\beta)$. Essentially, the main work in bounding
\[ \| \mathscr{M} f\|_{\ell^2(\mathbb{Z})} \leq \mathbf{C} \cdot \|f \|_{\ell^2(\mathbb{Z})},\]
boils down to overcoming these multi-frequency complications.

\section{Examples}
In what follows, we can and will assume that $k$ is sufficiently large depending on $d$.

To come to grips with 
\[  \mathscr{M} f \coloneqq  \sup_k |K_k*f|, \]
we first build some intuition by studying some examples:

Whereas the dilation invariance of the real line allows one to study \eqref{e-M2} or $M_{\text{HL}}$ using examples that live at unit scales, there is no such dilation invariance on $\mathbb{Z}$. Rather, a rough analogue of \enquote{zooming in} is passing to an arithmetic progression. Of course, this analogy is not precise, as arithmetic progressions are characterized by both gap size and diameter. Accordingly, we begin by analyzing the behavior of \eqref{2-e:Kk} when applied to functions
\begin{align}\label{2-ex:APtest}
\varphi_{Q,N} \coloneqq  \mathbf{1}_{Q \mathbb{Z}} \cdot \varphi(\cdot/N)
\end{align}
where $\varphi$ is a smooth bump function, and we think of $Q \leq N^{1/2}$; note the approximation
\begin{align}\label{e:size} \| \varphi_{Q,N} \|_{\ell^2(\mathbb{Z})} \approx (N/Q)^{1/2}.
\end{align}

A common simplifying assumption when passing to arithmetic progressions is that the gap size be prime, as this eliminates various arithmetic technicalities, so we will do so below.

With these reductions in mind, we begin to compute.

\subsection{Example}
For technical reasons, we will replace the full convolution operator $K_k$, with its smooth \enquote{top half,} in that for a smooth $\mathbf{1}_{[1,2]} \leq \phi \leq \mathbf{1}_{[1/2,4]}$, we consider
\begin{align}\label{2-e:new}
K_k' \coloneqq  \sum_n \phi_k(n) \cdot \delta_{n^d}.
\end{align}
Using convexity, arguing as in \eqref{e-convmhl}, we can bound  
\[ \sup_k |K_k*f| \leq \mathbf{C} \cdot \sup_k |K_k'*f|, \]
so there is no harm in this replacement.

So, we will be interested in understanding
\begin{align}\label{2-e:test} 
K_k'*\varphi_{Q,N}.
\end{align}
There are some scaling considerations that we quickly note:
Since
\begin{align*}
| n^d - (n-1)^d | \geq 2^{k(d-1)} 
\end{align*}
for $2^{k-1} < n \leq 2^{k+2}$, \eqref{2-e:test} becomes trivial if $N \leq  2^{k(d-1)}$, as in this case each element of the sum set
\begin{align*}
\{ n^d : 2^{k-1} < n \leq 2^{k+2} \} + \{ Qj : j \leq N/Q \}
\end{align*}
has $O(1)$ representations of the form $n^d + Qj$. On the other hand since $K_k'$ is supported on $[2^{dk+2}]$, we can assume that $N \leq 2^{dk+2}$, as convolution with $K_k'$ acts independently on intervals separated by $> 2^{dk+2}$. In particular, by translation invariance we can and will restrict to $|x| \leq \mathbf{C} \cdot 2^{dk}$, and assume that 
\begin{align}\label{2-e:ncomp}
N \approx 2^{k(d-1 + \delta)}
\end{align} for some $0 < \delta \leq 1$.

If we use Fourier inversion, we may express
\begin{align}\label{2-e:invconv}
\eqref{2-e:test}  = \int \widehat{K_k'}(\beta) \cdot \widehat{\varphi_{Q,N}}(\beta) \cdot e(\beta x) \ d\beta.
\end{align}
To determine the Fourier transform of $\varphi_{Q,N}$, we express the indicator function of $Q\mathbb{Z}$ as an exponential sum,
\[ \mathbf{1}_{Q\mathbb{Z}}(n) = \frac{1}{Q} \sum_{A=1}^Q e(A/Q \cdot n),\]
and compute
\begin{align}\label{e-psum}
\sum_n \frac{1}{Q} \sum_{A=1}^Q e(A/Q \cdot n) \cdot \varphi(n/N)  \cdot  e(-n \beta) = \frac{1}{Q} \sum_{A=1}^Q N \widehat{\varphi}(N(\beta - A/Q))
\end{align}
by applying Poisson summation to the Schwartz function
\[ t \mapsto \frac{1}{Q} \sum_{A=1}^Q e(A/Q \cdot t) \cdot \varphi(t/N)  \cdot  e(-t \beta) \]
In particular, up to Schwartz-tail considerations, we are only interested in 
\[ \beta \in \mathbb{Z}/Q\mathbb{Z} + O(N^{\mathbf{c}_{d,\delta} -1}),\]
as in the opposite case
\[  \big| \sum_n \frac{1}{Q} \sum_{A=1}^Q e(A/Q \cdot n) \cdot \varphi(n/N)  \cdot  e(-n \beta) \big| \leq \mathbf{C}_{d,\delta} \cdot N^{-100} \]
using the Schwartz decay of $\hat{\varphi}$, see \eqref{e-psum}. So, for such $\beta$, decomposing 
\[ \beta = A/Q + \eta, \; \; \; |\eta| \leq \mathbf{C} \cdot N^{\mathbf{c}_{d,\delta}-1},\]
and $n = pQ + r$, we find that
\begin{align*}
\beta n^d &= (A/Q + \eta) \cdot  (pQ +r)^d \\
& \qquad \equiv A/Q  \cdot r^d + \eta  \cdot (pQ)^d + O(|\eta| \cdot 2^{(d-1)k} \cdot Q) \mod 1,
\end{align*}
so that for such $\beta$
\begin{align}
\widehat{K_k'}(\beta) &= \sum_n \phi_k(n)  \cdot e(-\beta n^d) \nonumber \\
& \qquad = \sum_{pQ +r} \phi_k( pQ + r)  \cdot e(-A/Q  \cdot r^d)  \cdot e(-\eta  \cdot (pQ)^d) + O\big( \frac{2^{k(d-1)} \cdot Q}{N^{1-\mathbf{c}_{d,\delta}}}\big) \nonumber \\
& \qquad \qquad = \frac{1}{Q} \sum_{r=1}^Q e(-A/Q  \cdot r^d) \cdot \sum_{pQ} Q\cdot \phi_k( pQ) \cdot e(-\eta  \cdot (pQ)^d)   + O\big( \frac{2^{k(d-1)} \cdot Q}{N^{1-\mathbf{c}_{d,\delta}}}\big),
\end{align}
using the smoothness of $\phi$. To drop this error terms, we stipulate that $Q \leq 2^{k \delta/2}$, see \eqref{2-e:ncomp}, so that for $|\beta-A/Q| \leq \mathbf{C} \cdot N^{\mathbf{c}_{d,\delta} - 1}$ we may express
\begin{align}
{K_k'}(\beta) = S(A/Q) \cdot \sum_{pQ} Q\cdot  \phi_k( pQ ) \cdot e(-(\beta - A/Q)  \cdot (pQ)^d) +\widehat{ \mathcal{E}_k}(\beta), \nonumber
\end{align}
where $S(A/Q)$ are \emph{complete Weyl sums}, and $\mathcal{E}_k$ is an error term with small Fourier coefficients. Explicitly:
\begin{align*}
S(A/Q) &\coloneqq  \frac{1}{Q} \sum_{n \leq Q} e( - A/Q \cdot n^d ) = \frac{1}{Q} \sum_{m \leq Q} e( - A/Q \cdot m) \cdot |\{ n \leq Q : n^d \equiv m \mod Q \}| 
\end{align*}
precisely captures the equidistribution properties of $n^d \mod Q$, quantified via the upper bound,
\begin{align}\label{2-WEYL} |S(A/Q)| \leq \mathbf{C}_\epsilon \cdot Q^{\epsilon-\frac{1}{d}}, \; \; \; (A,Q) = 1, \;\;\; \epsilon > 0; \end{align}
see \cite{[18]}. And, $\mathcal{E}_k$ is a negligible error term, in that
\[ \| \widehat{\mathcal{E}_k} \|_{L^\infty(\mathbb{T})} \leq \mathbf{C} \cdot 2^{-k\delta/4} \]
(provided $\mathbf{c}_{d,\delta}$ has been chosen appropriately), so that
\begin{align*} 
\| \mathcal{E}_k* \varphi_{Q,N} \|_{\ell^2} = \| \widehat{\mathcal{E}_k} \cdot \widehat{\varphi_{Q,N}} \|_{L^2(\mathbb{T})}  \leq \mathbf{C} \cdot 2^{-k \delta/4} \cdot \| \widehat{ \varphi_{Q,N} } \|_{L^2(\mathbb{T})} 
= \mathbf{C} \cdot 2^{-k \delta/4} \cdot \| { \varphi_{Q,N} } \|_{\ell^2(\mathbb{Z})};
\end{align*}
in what follows, we will discard $\mathcal{E}_k$ from consideration.

By a Riemann summation argument, comparing
\begin{align*} Q \cdot \phi_k(Qp) \cdot e(-(pQ)^d (\beta - A/Q) )   &= \int_p^{p+1} Q \cdot \phi(Qt) \cdot e( - (\beta- A/Q) \cdot (tQ)^d ) \ dt \\
& \qquad + O\big(2^{- \mathbf{c}_{d,\delta} k} \cdot 2^{-k} \cdot Q  \cdot (1 + 2^{-k} \cdot|Qp|)^{-100} \big)
\end{align*}
we approximate, up to pointwise errors of the order $2^{-\mathbf{c}_{d,\delta} k}$
\begin{align*}
&\widehat{K_k'}(\beta) = S(A/Q) \cdot \int \phi(t) \cdot e(-2^{dk}  (\beta - A/Q) \cdot t^d ) \ dt + O(2^{-\mathbf{c}_{d,\delta} k}) \\
&  \qquad = S(A/Q) \cdot \int \phi'(s) \cdot e(-2^{dk} (\beta - A/Q) \cdot s) \ ds + O(2^{-\mathbf{c}_{d,\delta} k}) , \; \; \; \; \; \; \phi'(s) \coloneqq  \frac{\phi(s^{1/d})}{ds^{1-1/d}} \\
& \qquad \qquad = S(A/Q) \cdot \widehat{\phi'}(2^{dk}(\beta - A/Q)) + O(2^{-\mathbf{c}_{d,\delta} k})
\end{align*}
where $\phi'$ is Schwartz as well, see \eqref{2-e:new}. Consequently
\begin{align}
\eqref{2-e:invconv} \; \; \; \textrm{``}&=\textrm{''} \; \; \; \frac{1}{Q} \sum_{A \leq Q} S(A/Q) \int N \widehat{\varphi}(N(\beta - A/Q)) \cdot \widehat{\phi'}(2^{dk}(\beta - A/Q)) \cdot e(\beta x) \ d\beta, \nonumber \\
& \qquad = \frac{1}{Q} \sum_{A \leq Q} e(A/Q x) \cdot S(A/Q) \cdot \Phi(x),
\nonumber
\end{align}
where we consolidate
\[ \Phi(x) \coloneqq  \int \varphi((x-2^{dk} s)/N) \cdot \phi'(s) \ ds\]
so that
\[ \hat{\Phi}(\beta) = N \hat{\varphi}(N \beta) \cdot \widehat{\phi'}(2^{dk} \beta),\] 
and thus $\| \Phi \|_{\ell^2(\mathbb{Z})} \approx \frac{N}{2^{dk/2}}$. Summing, we find that
\begin{align}\label{e-comp2}
\| K_k'* \varphi_{Q,N} \|_{\ell^2(\mathbb{Z})}^2  \textrm{``}&=\textrm{''} \sum_x \big| \frac{1}{Q} \sum_{A \leq Q} e(A/Q x) \cdot S(A/Q) \big|^2 \cdot |\Phi(x)|^2 \nonumber  \\
& \qquad  = \frac{1}{Q^2} \sum_{A,B \leq Q} S(A/Q) \cdot  \overline{S(B/Q)} \cdot \sum_x e(( A/Q - B/Q) x) \cdot |\Phi(x)|^2 \nonumber \\
& \qquad \qquad = \frac{1}{Q^2} \sum_{A,B \leq Q} S(A/Q) \cdot \overline{S(B/Q)} \cdot \widehat{ |\Phi|^2}(A/Q - B/Q).
\end{align}
Since 
\[ \widehat{|\Phi|^2} = \hat{\Phi}*\hat{\Phi}^*, \; \; \; \text{ where } \;\; \; g^*(x) \coloneqq  \overline{g(-x)} \]
is essentially supported inside $\{ |\xi| \leq \mathbf{C} \cdot N^{-1} \}$, we have
\begin{align}\label{e-comp} \widehat{ |\Phi|^2}(A/Q - B/Q) = \delta_{A=B} \cdot \| \Phi \|_{\ell^2(\mathbb{Z})}^2 + O( (N/Q)^{-100} )
\end{align}
as whenever $A \neq B$, $|A/Q - B/Q| \geq Q^{-1} \gg N^{-1}$. Substituting \eqref{e-comp} into \eqref{e-comp2}, we find that
\begin{align*}
\| K_k'*\varphi_{Q,N} \|_{\ell^2(\mathbb{Z})}^2 \textrm{``}&=\textrm{''} \frac{1}{Q^2} \sum_{A,B \leq Q} S(A/Q)  \cdot \overline{S(B/Q)} \cdot \delta_{A=B} \cdot \| \Phi \|_{\ell^2(\mathbb{Z})}^2 \\
& \qquad = \frac{1}{Q^2} \sum_{A \leq Q} |S(A/Q)|^2 \cdot \| \Phi \|_{\ell^2(\mathbb{Z})}^2.
\end{align*}
By Hua's estimate \eqref{2-WEYL}, using the fact that $Q$ is prime, we bound
\[ \frac{1}{Q} \sum_{A \leq Q} |S(A/Q)|^2 = \frac{1}{Q} + \frac{1}{Q} \sum_{A \leq Q-1} |S(A/Q)|^2 \leq \mathbf{C}_\epsilon \cdot ( 1/Q + Q^{\epsilon - 2/d})\]
so that we find
\begin{align*}
\| K_k'*\varphi_{Q,N} \|_{\ell^2(\mathbb{Z})} &\leq \mathbf{C}_\epsilon \cdot Q^{\epsilon - 1/d} \cdot Q^{-1/2} \cdot \frac{N}{2^{dk/2}} \\
& \qquad = \mathbf{C}_\epsilon \cdot Q^{\epsilon - 1/d} \cdot (N/2^{dk})^{1/2} \cdot (N/Q)^{1/2} \\
& \qquad \qquad \leq \mathbf{C}_\epsilon \cdot Q^{\epsilon - 1/d} \cdot (N/2^{dk})^{1/2} \cdot \| \varphi_{Q,N} \|_{\ell^2(\mathbb{Z})}
\end{align*}
The prefactor $(N/2^{dk})^{1/2}$ comes from scaling considerations; if we are interested in an estimate that is independent of scale, we arrive at the bound
\[ \| K_k'* \varphi_{Q,N} \|_{\ell^2(\mathbb{Z})} \leq \mathbf{C}_\epsilon \cdot Q^{\epsilon - 1/d} \cdot \| \varphi_{Q,N} \|_{\ell^2(\mathbb{Z})}.\]
In particular, quantitatively, the lower bound
\[ \| K_k'* \varphi_{Q,N} \|_{\ell^2(\mathbb{Z})} \geq \delta \cdot \| \varphi_{Q,N} \|_{\ell^2(\mathbb{Z})} \]
automatically forces a bound on the \enquote{arithmetic complexity} of $\varphi_{Q,N}$ via the estimate
\[ Q \leq \mathbf{C}_\epsilon \cdot \delta^{-d - \epsilon}.\]
In particular, we arrive at the following heuristic:
\begin{heuristic}\label{h-bour}
The only obstruction to 
\[ \| K_k'*f \|_{\ell^2(\mathbb{Z})} \ll \| f \|_{\ell^2(\mathbb{Z})} \]
are \enquote{low arithmetic complexity} considerations. \end{heuristic}

\subsection{The Take-Away}
By an application of Weyl's Lemma, a special case of which is stated below, Bourgain was able to make the previous Heuristic \ref{h-bour} rigorous, concluding that the above range of examples were typical: if we set
\[ \Pi_k(\beta) \coloneqq  \sum_{(A,Q) = 1, Q \leq 2^{\mathbf{c} k}} \widehat{\chi}(2^{(d- \mathbf{c})k}(\beta - A/Q)) \]
for a Schwartz function $\chi$ with
\[ \mathbf{1}_{[-1/4,1/4]} \leq \widehat{\chi} \leq \mathbf{1}_{[-1/2,1/2]}\]
then
\begin{align}\label{e-app} \widehat{K_k}(\beta) = \widehat{K_k}(\beta) \cdot \Pi_k(\beta) + O(2^{-\mathbf{c}' k}), \end{align}
and similarly for $K_k'$. In particular, whenever $\Pi_k(\beta) \neq 1$, then necessarily the conclusion of Weyl's Lemma holds.
\begin{lemm}[Weyl's Lemma, Special Case]\label{2-WEYLL}
Suppose $|\beta - a/q| \leq \frac{1}{q\cdot N^{d - \mathbf{c}}}$ with 
\[ N^{\mathbf{c}} \leq q \leq N^{d - \mathbf{c}}.\] Then there exists some $\mathbf{c}_d > 0$ so that \[  |\frac{1}{N} \sum_{n \leq N} e( -\beta n^d ) | \leq \mathbf{C}_d \cdot N^{-\mathbf{c}_d}.\]
\end{lemm}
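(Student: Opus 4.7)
The plan is to deploy the classical \emph{Weyl differencing} argument: squaring an exponential sum with polynomial phase reduces the degree of that phase by one, so iterating $d-1$ times collapses the problem to estimating a linear exponential sum, which is just a geometric series. Concretely, I would begin from the identity
\[ \Big|\sum_{n \leq N} e(-\beta n^d)\Big|^2 = \sum_{|h| < N} \sum_{n \in I_h} e\big( -\beta \, [(n+h)^d - n^d] \big), \]
which produces a phase polynomial of degree $d-1$ in $n$ with leading coefficient $d \cdot h$. Pairing this with Cauchy--Schwarz to bring absolute values inside the $h$-sum, and iterating $d-1$ times, should yield an inequality of the shape
\[ \Big|\frac{1}{N}\sum_{n \leq N} e(-\beta n^d)\Big|^{2^{d-1}} \leq \mathbf{C}_d \cdot N^{-d} \sum_{|h_1|,\dots,|h_{d-1}| \leq N} \min\big\{N,\, \|d!\, h_1 \cdots h_{d-1}\, \beta\|^{-1}\big\}, \]
where $\|\cdot\|$ denotes distance to the nearest integer.

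Next I would reduce matters to counting, weighted by the divisor function, those integers $H \in [1, \mathbf{C}_d \cdot N^{d-1}]$ for which $H\beta$ is close to an integer. Using the Diophantine hypothesis $\beta = a/q + \theta$ with $(a,q) = 1$ and $|\theta| \leq 1/(qN^{d-\mathbf{c}})$, I would partition the $H$-sum into arithmetic progressions modulo $q$ --- on each progression $H \equiv r \pmod{q}$ the quantity $\|H\beta\|$ is well approximated by $\|r a/q\|$ up to an error $O\big(H / (q N^{d-\mathbf{c}})\big)$ --- and then invoke the divisor bound $\tau_{d-1}(H) \leq \mathbf{C}_{d,\epsilon} N^\epsilon$. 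A standard computation then yields the classical Weyl bound
\[ \Big|\frac{1}{N}\sum_{n \leq N} e(-\beta n^d)\Big|^{2^{d-1}} \leq \mathbf{C}_{d,\epsilon} \cdot N^\epsilon \cdot \big( q^{-1} + N^{-1} + q\, N^{-d} \big). \]

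Finally, under the stated hypotheses $N^{\mathbf{c}} \leq q \leq N^{d-\mathbf{c}}$ each of the three terms in parentheses is at most $N^{-\mathbf{c}}$; choosing $\epsilon = \mathbf{c}/2$ and taking $2^{d-1}$-th roots completes the proof with, say, $\mathbf{c}_d \coloneqq \mathbf{c}/2^d$. The main technical obstacle will be the divisor-weighted counting step producing the three terms $q^{-1}, N^{-1}, q N^{-d}$: this requires carefully splitting the $H$-sum into full blocks of length $q$, residual partial blocks, and the sparse set of $H$ with $H a \equiv 0 \pmod{q}$, all while verifying that within each block the Diophantine shift $\theta$ is a harmless perturbation of $a/q$. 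This last verification is precisely where the two-sided constraint $N^{\mathbf{c}} \leq q \leq N^{d-\mathbf{c}}$ is essential, forcing both that the approximation $\beta \approx a/q$ survives multiplication by $H \sim N^{d-1}$ and that $q$ itself is not so small that $q^{-1}$ fails to give any gain.
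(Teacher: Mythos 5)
Your argument is correct, but it is worth noting that the paper does not actually prove Lemma \ref{2-WEYLL}: it is quoted as a special case of the classical Weyl inequality, with the nearby reference to Hua's book standing in for a proof. What you have written is precisely the standard proof from the literature --- Weyl differencing iterated $d-1$ times to reach a linear phase, the divisor-function bound $\tau_{d-1}(H) \leq \mathbf{C}_{d,\epsilon} N^{\epsilon}$, and the block-decomposition modulo $q$ giving $\sum_{H \leq \mathbf{C}_d N^{d-1}} \min\{N, \|H\beta\|^{-1}\} \leq \mathbf{C}_d \cdot (N^{d-1}/q + 1)(N + q \log q)$, which after normalization yields the classical bound $\mathbf{C}_{d,\epsilon} N^{\epsilon}(q^{-1} + N^{-1} + qN^{-d})$; the hypotheses $N^{\mathbf{c}} \leq q \leq N^{d-\mathbf{c}}$ then make each term $O(N^{-\mathbf{c}})$, and your exponent bookkeeping ($\epsilon = \mathbf{c}/2$, $\mathbf{c}_d = \mathbf{c}/2^d$) is sound. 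Two small points to make explicit when writing this up: the statement implicitly requires $(a,q)=1$ (as you assume --- otherwise reducing the fraction could push the denominator below $N^{\mathbf{c}}$ and the bound genuinely fails), and in the reduction to the $H$-sum you should record separately the tuples with some $h_i = 0$, whose total contribution of $O(N^{d-1})$ is exactly what produces the $N^{-1}$ term after multiplying by $N^{-d}$. Also note that your perturbation estimate $H|\theta| \leq \mathbf{C}_d N^{\mathbf{c}-1}/q$ uses $\mathbf{c} \leq 1$, which is consistent with the paper's convention that $\mathbf{c}$ is suitably small; equivalently one may observe that $|\beta - a/q| \leq 1/(qN^{d-\mathbf{c}}) \leq 1/q^2$, so the standard counting lemma applies verbatim.
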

At this point, by recycling the reasoning from the previous example, one arrives at the physcial-space approximation
\[ K_k' \; \; \; \textrm{``}=\textrm{''} \; \; \;
L_k' \coloneqq  \sum_{(A,Q) = 1, \ Q \leq 2^{\mathbf{c} k}}  S(A/Q) \cdot \text{Mod}_{A/Q} (\chi_{(d-\mathbf{c})k}*\phi'_{dk}) \]
in that 
\begin{align}\label{e-multclose} \| \widehat{K_k' - L_k'} \|_{L^{\infty}(\mathbb{T})} \leq \mathbf{C}_d \cdot 2^{- \mathbf{c}_d k} \end{align}
and so the maximal function is bounded on $\ell^2(\mathbb{Z})$
\begin{align}\label{e-sfxn}
\| \sup_k |(K_k' - L_k')*f| \|_{\ell^2(\mathbb{Z})}^2 \leq  \sup_\beta \ \sum_k |\widehat{K_k'}(\beta) - \widehat{L_k'}(\beta)|^2 \cdot \| f \|_{\ell^2(\mathbb{Z})}^2 \leq \mathbf{C}_d \cdot \| f\|_{\ell^2(\mathbb{Z})}^2,
\end{align}
by arguing as in \eqref{e-sfxnarg}, inserting the quantitative bound \eqref{e-multclose} for the final inequality.

Following Heuristic \ref{h-bour}, it makes sense to decompose $L_k'$ according to the approximate level-sets of the Gauss sums, and seek sufficient decay in $s$ on the $\ell^2(\mathbb{Z})$-norms of maximal functions
\[ \sup_{k \geq \mathbf{C} s} |L_{k,s}'* f|, \]
where 
\[ L_{k,s}'\coloneqq  
\sum_{A/Q \in \mathcal{R}_s}  S(A/Q) \cdot \text{Mod}_{A/Q} (\chi_{(d-\mathbf{c})k}*\phi'_{dk})
\]
for 
\begin{align}\label{e:Rs}
\mathcal{R}_s \coloneqq  \{ (A,Q) = 1, \ 2^{s-1} \leq Q < 2^s \}:
\end{align}
one bounds
\[ \sup_{k} |L_k'*f| = \sup_k |\sum_{s \leq \mathbf{c} k} L_{k,s}'*f| \leq \sum_{s = 1}^{\infty} \sup_{k \geq \mathbf{C} s} |L_{k,s}'*f|.\]
After a little slight of hand, using Plancherel's theorem to morally extract a geometrically decacying prefactor,
\[ L_{k,s}'(x) \; \; \; \textrm{``}=\textrm{''} \; \; \; 2^{- \mathbf{c}_d s} \cdot 
\sum_{A/Q \in \mathcal{R}_s}  \text{Mod}_{A/Q} (\chi_{(d-\mathbf{c})k}*\phi'_{dk})(x)
\]
it suffices to prove the following maximal inequality (possibly for a slightly different choice of $\chi$): 
\[ \| \sup_{k \geq \mathbf{C} s} | \big( \sum_{A/Q \in \mathcal{R}_s}  \text{Mod}_{A/Q} \chi_k \big) * f| \|_{\ell^2(\mathbb{Z})} \leq \mathbf{C}_\epsilon \cdot 2^{\epsilon s} \cdot \| f\|_{\ell^2(\mathbb{Z})}, \; \; \; \epsilon > 0;\]
 by averaging over translations, exploiting the smoothness of $\{ \chi_k : k \geq \mathbf{C} s\}$ at physical scales $2^{\mathbf{C} s}$, it suffices to prove the analogous real-variable inequality:
\[ \| \sup_{k \geq \mathbf{C} s} | \big( \sum_{A/Q \in \mathcal{R}_s}  \text{Mod}_{A/Q} \chi_k \big) * f| \|_{L^2(\mathbb{R})} \leq \mathbf{C}_\epsilon \cdot 2^{\epsilon s} \cdot \| f\|_{L^2(\mathbb{R})}, \; \; \; \epsilon > 0;\]
finally, by exploiting the dilation invariance of $\mathbb{R}$, matters at last reduce to establishing the following multi-frequency maximal estimate, see \cite{[9]}:
\begin{prop}\label{2-MAINPROP}
Suppose that $\Theta \coloneqq  \{ \theta_1,\dots,\theta_N \}$ are $1$-separated,
\[ \text{i.e. } \; \; \; |\theta_i - \theta_j| > 1, \; \; \; i \neq j.\]
Then
\begin{align}\label{e-mm} \| \mathcal{M}_{\Theta} f \|_{L^2(\mathbb{R})} \coloneqq  \| \sup_{k \geq \mathbf{C}} |  \sum_{n \leq N} ( \text{\emph{Mod}}_{\theta_n} \chi_k ) * f| \|_{L^2(\mathbb{R})} \leq \mathbf{C}_\epsilon \cdot N^{\epsilon} \cdot \| f\|_{L^2(\mathbb{R})}, \; \; \; \epsilon > 0.
\end{align}
\end{prop}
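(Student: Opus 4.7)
\emph{Plan.} The strategy is to combine a short-jumps square function estimate---capturing the orthogonality afforded by the $1$-separation of $\Theta$---with the Rademacher--Menshov maximal inequality, and to argue that only an essentially logarithmic number of scales contribute.

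Write $T_k f \coloneqq \sum_n (\text{Mod}_{\theta_n}\chi_k)*f$, so that its Fourier multiplier is $m_k(\xi)=\sum_n \widehat{\chi}(2^k(\xi-\theta_n))$. Since $\widehat{\chi}$ is supported in $[-1/2,1/2]$ and $\Theta$ is $1$-separated, for $k\geq\mathbf{C}$ the $N$ bumps comprising $m_k$ have pairwise disjoint supports; in particular $|m_k(\xi)|\leq 1$ for every $\xi$, and Plancherel gives the uniform single-scale bound $\|T_k f\|_{L^2(\mathbb{R})}\leq\|f\|_{L^2(\mathbb{R})}$. The same analysis yields the short-jump square function estimate
\[
\sum_{k\geq\mathbf{C}} \|T_{k+1}f-T_k f\|_{L^2(\mathbb{R})}^2 \leq \mathbf{C}\cdot \|f\|_{L^2(\mathbb{R})}^2,
\]
since at each fixed $\xi$ at most one $\theta_n$ contributes to $m_k(\xi)$, and the smooth transition $k\mapsto \widehat{\chi}(2^k(\xi-\theta_n))$ passes from $0$ to $1$ within a bounded window of scales only, so that $\sum_k|m_{k+1}(\xi)-m_k(\xi)|^2=O(1)$.

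Linearizing the supremum by a measurable stopping time $k(x)$ and applying the Rademacher--Menshov inequality to the dyadic collection $\{T_k f\}_{\mathbf{C}\leq k\leq K}$ then yields
\[
\|\sup_{\mathbf{C}\leq k\leq K}|T_k f|\|_{L^2(\mathbb{R})} \leq \mathbf{C}\cdot(1+\log K)\cdot\|f\|_{L^2(\mathbb{R})}.
\]
The main obstacle, which is precisely where the $N^\epsilon$ loss is dictated, is to truncate the effective range of scales to $K = O(\log N)$. For $k$ substantially beyond this range, $m_k$ is supported in a set of Lebesgue measure at most $\mathbf{C}\cdot N\cdot 2^{-k}$, which is $\ll 1$; one exploits this, together with the orthogonality afforded by the $1$-separation, to dominate the tail supremum by a vector-valued Hardy--Littlewood maximal function applied to the frequency-modulated copies $e(-\theta_n x)f(x)$ and recover $L^2$ control via orthogonality. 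Delicate bookkeeping is required at this step because the finite-dimensional multi-frequency structure replaces the self-similar scaling that makes the analogous single-frequency Euclidean estimate trivial. Granted the tail bound, the Rademacher--Menshov estimate becomes $\mathbf{C}\cdot(\log N)^{\mathbf{C}}\cdot \|f\|_{L^2(\mathbb{R})}$, which for every $\epsilon>0$ is absorbed by $\mathbf{C}_\epsilon \cdot N^\epsilon\|f\|_{L^2(\mathbb{R})}$.
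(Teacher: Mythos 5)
Your treatment of the small scales is correct and is essentially what the paper does there: at each $\xi$ at most one $\theta_n$ contributes to $m_k(\xi)=\sum_{n}\widehat{\chi}(2^{k}(\xi-\theta_n))$, the transition of $k\mapsto\widehat{\chi}(2^{k}(\xi-\theta_n))$ occupies $O(1)$ scales, so $\sum_k|m_{k+1}(\xi)-m_k(\xi)|^2=O(1)$, and Plancherel plus the Rademacher--Menshov inequality bounds $\|\sup_{\mathbf{C}\le k\le K}|T_kf|\|_{L^2}$ by $\mathbf{C}(1+\log K)\|f\|_{L^2}$ for any \emph{finite} range of scales; the paper handles $ \mathbf{C}\le k\le \mathbf{C}\log^2N$ even more crudely by a plain square function bound. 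The gap is your tail step, and it is not a technical loose end: the supremum over $k\geq \mathbf{C}\log N$ is the heart of the proposition. The smallness of $|\supp m_k|\leq \mathbf{C}N2^{-k}$ buys nothing, because by the projection structure the tail operator applied to $f$ equals the same operator applied to $g$ with $\hat g$ supported in those small neighborhoods of $\Theta$, and for $\hat f$ concentrated in tiny neighborhoods of the $\theta_n$ one has $T_kf=f$ for all $k$ up to an arbitrarily large threshold; so the tail maximal function is as large as the full one and cannot be discarded or "absorbed."

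Moreover the mechanism you propose for it---dominate by a vector-valued Hardy--Littlewood maximal function in the coordinates $f_{\theta_n}$ and ``recover $L^2$ control via orthogonality''---cannot work as stated: once the supremum in $k$ is taken inside each frequency component, the modulations $e(\theta_n x)$ produce no cancellation, and the only estimate available is Cauchy--Schwarz in $n$,
\[ \sup_{k}\Big|\sum_{n\le N} e(\theta_n x)\cdot \chi_k*f_{\theta_n}(x)\Big| \leq N^{1/2}\cdot\Big(\sum_{n\le N}\sup_k|\chi_k*f_{\theta_n}(x)|^2\Big)^{1/2}, \]
which after the $L^2$ orthogonality in $n$ gives exactly the trivial $N^{1/2}$ loss the proposition must beat. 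Overcoming this for the large scales is precisely what forces the paper's (Bourgain's) entropic argument: localize to intervals $|I|=\mathbf{C}$, freeze a point $x_I$, control the local supremum by the $\ell^2([N])$-valued jump-counting function $\vec{N}_{2^{-v}}(x_I)$, interpolate $2^{-v}\min\{\vec{N}_{2^{-v}}^{1/2},N^{1/2}\}\leq N^{1/2-1/r}\cdot\mathcal{V}^r_{\vec{f_{\Theta}}}$ with $r=2+\frac{\mathbf{c}}{\log N}$, and invoke the convolution form of L\'epingle's inequality, whose $\frac{r}{r-2}\approx\log N$ constant supplies the second logarithm in the final $\log^2 N$ bound. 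Without this step (or a genuine substitute for it), your argument only bounds the truncated operator $\sup_{\mathbf{C}\le k\le K}|T_kf|$, not $\mathcal{M}_{\Theta}$.
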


The proof of Proposition \ref{2-MAINPROP}, which we will presently establish with a bound on the right side \eqref{e-mm} of the form $\log^2 N$, combines ideas from harmonic analysis, probability theory, and Banach space geometry, and was a creative novelty, having further applications to problems in pointwise ergodic theory \cite{[10],[13],[14]} and to problems in \emph{time frequency analysis}, for instance \cite{[15],[24]}. On the other hand, in some ways, the proof technique was highly constrained: there are only so many ways to control a maximal function on $L^2$, as we will explore below.

\section{The Multi-Frequency Problem}

\subsection{Preliminary Observations}\label{ss-pre}
For what is to follow, we introduce that notation
\begin{align}\label{e-Xi}
\Xi_k f \coloneqq  \sum_{n \leq N} (\text{{Mod}}_{\theta_n} \chi_k )*f,
\end{align}
so that we can express
\[ \M_{\Theta} f = \sup_k |\Xi_k f|; \]
here, as above, $\chi$ is a Schwartz function with
\[ \mathbf{1}_{[-1/4,1/4]} \leq \hat{\chi} \leq \mathbf{1}_{[-1/2,1/2]}\]

While the $\Xi_k$ have oscillatory kernels, they admit a natural projection structure, in that
\[ \Xi_k \Xi_l = \Xi_l, \; \; \; k \geq l + 2, \]
as can be seen by passing to Fourier space, see \eqref{e-fexp0} below; to avoid needless technicality, we will henceforth sparsify our set of scales into parity classes, and restrict our attention to a single class, so that whenever $k > k'$, we necessarily have $k \geq k' + 2$.

As establishing Proposition \ref{2-MAINPROP} is an $L^2$-based problem, to better understand these convolution operators, we pass to Fourier space, and compute
\begin{align}\label{e-fexp0}
\widehat{\Xi_kf}(\xi) = \sum_{n \leq N} \widehat{\chi_k}(\xi - \theta_n) \cdot \hat{f}(\xi)
\end{align}
so that
\[\widehat{\Xi_k}(\xi) = \sum_{n \leq N} \widehat{\chi_k}(\xi - \theta_n),\] 
after conflating the operator with its kernel, so that we can alternatively represent
\begin{align}\label{e-fexp}
 \Xi_k f(x) &= \sum_{n \leq N} e(\theta_n x) \int 
 \widehat{\chi_{k}}(\xi) \hat{f}(\xi + \theta_n) e( \xi x) \ d\xi \nonumber \\
& \qquad = \sum_{n \leq N} e(\theta_n x) \cdot \big( \chi_k*(\text{Mod}_{-\theta_n} f) \big)(x) \nonumber \\
& \qquad \qquad = \sum_{n \leq N} e(\theta_n x) \cdot \big( \chi_k*( \chi* \text{Mod}_{-\theta_n} f) \big)(x) \nonumber \\
& \qquad \qquad \qquad \eqqcolon  \sum_{n \leq N} e(\theta_n x) \cdot (\chi_k*f_{\theta_n})(x),
\end{align}
using the fact that $k \geq \mathbf{C}$ and a brief argument with the Fourier transform to arrive at the reproducing identity 
\[ \chi_k* \chi = \chi.\]
The advantage to passing to the formulation involving $\{ f_{\theta_n} \}$ is that the smoothing effect of convolution with $\chi_k$ has been \enquote{factored} out from the oscillatory exponentials $\{ e(\theta_n x) : n \}$. In particular, heuristically, on intervals of bounded size $\mathbf{C}$, as $k$ gets large only the exponentials should vary: if $|I| = \mathbf{C}$ is an interval of appropriate length, then whenever $x \in I$ and $2^k \gg \mathbf{C}$
\begin{align}\label{2-e:TAYLOR}
I \ni x \mapsto \sum_{ n \leq N} e(\theta_nx) \cdot \phi_k*f_{\theta_n}(x) \; \; \; \textrm{``}=\textrm{''}  \; \; \; \sum_{ n \leq N} e(\theta_nx) \cdot \phi_k*f_{\theta_n}(x_I)
\end{align}
for any $x_I$. In particular, if we subdivide $\mathbb{R}$ into (dyadic) intervals $\{ I : |I| = \mathbf{C} \}$ then on each interval we can estimate
\begin{multline}\label{e-vv}
\int_I |\Xi_k f(x)|^2 \ dx \; \; \; \textrm{``}=\textrm{''} \; \; \; \min_{x_I
  \in I} \, \int_I |\sum_{n} e(\theta_nx) \cdot \chi_k*f_{\theta_n}(x_I)|^2 \
dx \\ \leq \mathbf{C} \cdot \min_{x_I \in I} \, \sum_{n \leq N} |\chi_k*f_{\theta_n}(x_I)|^2 \cdot |I|
\end{multline}
as can be seen by bounding
\begin{align*} 
\| \sum_{n \leq N} e(\theta_n x) \cdot a_n \|_{L^2(I)}^2 &\leq \| \sum_{n \leq N} e(\theta_n x) \cdot a_n \cdot \psi_I(x) \|_{L^2(\mathbb{R})}^2 = \| \sum_{n \leq N} a_n \cdot \widehat{\psi_I}(\xi - \theta_n) \|_{L^2(\mathbb{R})}^2 \\
& \qquad = \sum_{n \leq N} a_n \overline{a_m} \cdot  \langle \widehat{\psi_I}(\cdot - \theta_n), \widehat{\psi_I}(\cdot - \theta_m) \rangle = \sum_{n \leq N} |a_n|^2 \cdot \| \widehat{ \psi_I} \|_{L^2(\mathbb{R})}^2 \\
& \qquad \qquad \leq \mathbf{C} \cdot \sum_{n \leq N} |a_n|^2 \cdot |I|,
\end{align*}
for  $\mathbf{1}_I \leq |\psi_I| \leq \mathbf{C} \cdot (1 + \text{dist}(\cdot,I)/|I|)^{-100}$ with a Fourier transform compactly supported inside $[-1/2,1/2]$; this support constrain ensures that
\[ \psi_I(\xi - \theta_n) \cdot \psi_I(\xi - \theta_m) \equiv 0, \; \; \; n \neq m,\]
and since $|I| \geq \mathbf{C}$, the uncertainty principle is satisfied and such a $\psi_I$ can be chosen.

Seeking uniformity, if we set
\begin{align}\label{e-MAX} \F_{\Theta}(x)^2 \coloneqq  \F_{\Theta,\mathbf{C}}(x)^2 \coloneqq  \sum_{n \leq N} \sup_{k \geq \mathbf{C}} |\chi_k*f_{\theta_n}(x)|^2,\end{align} 
then we have a uniform \emph{norm} bound
\begin{align}\label{e-unifnorm}
\sup_{k \geq \mathbf{C}} \, \| \Xi_k f \|_{L^2(I)} \leq \mathbf{C} \cdot \min_{x_I \in I} \, \F_{\Theta}(x_I) \cdot |I|^{1/2} \leq \| \F_{\Theta} \|_{L^2(I)},
\end{align}
and our task is to control
\begin{align}\label{e-local}
\| \M_{\Theta} f \|_{L^2(\mathbb{R})}  = \big( \sum_{|I| = \mathbf{C}} \| \sup_{k \geq \mathbf{C}} \, |\Xi_k f |\|_{L^2(I)}^2 \big)^{1/2},
\end{align}
where the previous calculation motivates us to split up the real line into intervals of \enquote{small} length and treat the contribution of $\M_{\Theta} f$ on each interval individually. The problem, therefore, boils down to controlling a supremum on $L^2$: we can localize and handle the contribution of each individual $\Xi_k$ via the bound
\begin{align}\label{e-ortmax}
\| \F_{\Theta} \|_{L^2(\mathbb{R})}^2 &= \sum_{n \leq N} \int \sup_{k \geq \mathbf{C}} |\chi_k*f_{\theta_n}(x)|^2 \leq \mathbf{C} \cdot \sum_{n \leq N} \int |f_{\theta_n}(x)|^2 \ dx =
\mathbf{C} \cdot \sum_{n \leq N} \int |\widehat{f_{\theta_n}}(\xi)|^2 \ d\xi 
\nonumber \\
& \qquad = \mathbf{C} \cdot \sum_{n \leq N} \int |\hat{\chi}(\xi)|^2 \cdot |\widehat{f}(\xi + \theta_n)|^2 \ d\xi = \mathbf{C} \cdot \int \sum_{n \leq N} |\hat{\chi}(\xi - \theta_n)|^2 \cdot |\hat{f}(\xi)|^2 \ d\xi \nonumber \\
& \qquad \qquad \leq \mathbf{C} \cdot \sup_{\xi}\ \sum_{n \leq N} |\hat{\chi}(\xi - \theta_n)|^2 \cdot \|\hat{f} \|_{L^2(\mathbb{R})}^2  \leq \mathbf{C} \cdot  \|{f} \|_{L^2(\mathbb{R})}^2, \end{align}
using the separation of the frequencies $|\theta_n - \theta_m| > 1, \ n \neq m$, and the Hardy--Littlewood Maximal function in the first inequality. Our task is to pass from \emph{uniform} control of the $\{ \Xi_k : k \}$ to \emph{simultaneous} control, via $\mathcal{M}_{\Theta}$. This is a task that arises frequently -- but is often constrained, as we pause to explore. 

\subsection{Bounding a Supremum on $L^2$}
Suppose that $\{ F_k  \} \in L^2(X)$ is a collection of functions on a measure space, and we are interested in controling
\begin{align}\label{2-e:L2MAX}
\| F_* \|_{L^2(X)} \coloneqq  \| \sup_k |F_k | \|_{L^2(X)}.
\end{align}
To the best of my knowledge, there are essentially four ways to control $F_*$ on $L^2$:
\begin{itemize}
\item Martingale/stopping time methods, like those used to prove Doob's Maximal Inequality from martingale theory, or the closely linked Hardy--Littlewood Maximal Inequality;
\item Semigroup methods, like those used in the Hopf--Dunford--Schwartz Maximal Theorem, a special case of which implies dimension independent bounds on the maximal function $\sup_t |e^{t \triangle} f|$;
\item $TT^*$ orthogonality methods, in which the supremum $F_*$ is realized as a particular linear operator,
\begin{align*}
T\{ F_k\} \coloneqq  \sum_t \mathbf{1}_{E_k} F_k
\end{align*}
for $\{ E_k \}$ a disjoint partition of $X$, and then $T$ is composed with its adjoint, to efficiently compute
\begin{align*}
\| T\|_{L^2(X) \to L^2(X)} = \| T T^* \|_{L^2(X) \to L^2(X)}^{1/2};
\end{align*}
this technique is common in oscillatory integral situations; and
\item Entropy arguments, which leverage vestigial smoothness in the map $k \mapsto F_k(x)$ to control $F_*$.
\end{itemize}

Of the four methods, the oscillatory nature of the averages $\{ \Xi_k : k \}$ precludes a direct argument involving the first method, which gives a privileged role to the zero frequency (expectation);
the serious failure of the identity
\[ \Xi_k \Xi_l \neq \Xi_{k+l} \]
precludes the second method.

As for the $TT^*$ approach, if we linearize our supremum and consider the operator
\begin{align*}
T f(x) = \sum_k \mathbf{1}_{E_k}(x) \cdot \int \sum_{n \leq N} e(\theta_n x) \int \chi_k(x-y) e(-\theta_n y) f(y)  \ dy,
\end{align*}
then the dual operator, $T^*$, is given by
\begin{align*}
T^*g(x) = \sum_r \sum_{n \leq N} e(\theta_n y) \cdot \int e(- \theta_n x) \cdot (g \cdot \mathbf{1}_{E_r})(x) \overline{\chi_k}(x-y) \ dx,
\end{align*}
and nothing is really gained by composition.

Accordingly, we turn our attention to the entropic approach to bounding a supremum on $L^2$.

\section{From Bourgain's Toolkit: The Entropic Method}
This section reviews material over which Bourgain had total command at the time of \cite{[9]}; see \cite{[4],[5],[7],[8]} or even $\S 3$ of \cite[\S 3]{[9]} for representative examples, and
$\S 6$ of \cite[\S 6]{[35]} for an excellent summary. In particular, I imagine that the information Bourgain gleaned from the above Subsection~\S \ref{ss-pre} was enough to guide him directly to the below Section~\S \ref{s-shape}. While the implementation of this approach in studying $\M_{\Theta}$ seems magical upon first reading \cite{[2]}, or in my case the exposition of \cite{[36]}, my hope is that after fully digesting the following material, the reader is able to understand the intution behind the way Bourgain came to his argument.

The basic mechanism behind the entropic approach is to leverage \enquote{size} and \enquote{smoothness,} or rather \enquote{stickiness,} in the parameter space to control a supremum. In terms of our problem at hand, we have uniform control over each average $\Xi_k$ via \eqref{e-unifnorm}, and we search for some notion of smoothness/stickiness to complement this uniformity.

To show off this interplay, we review the following example. 

\begin{lemm}[Sobolev Embedding Lemma]\label{2-l:SOBEMB}
Suppose that $I$ is an interval, and that $F(x,\cdot)$ is absolutely continuous for almost every $x$ with an $L^2$ density. Then the following pointwise estimate holds:
\begin{align}
F_I(x) \coloneqq  \sup_{t \in I} |F(x,t)| \leq \mathbf{C} \cdot |F(x,t_I)| + \mathbf{C} \cdot \left( \int_I |F(x,t)|^2 \ dt \right)^{1/4} \cdot \left( \int_I |\partial_t F(x,t)|^2 \ dt \right)^{1/4} \nonumber 
\end{align}
for any $t_I \in I$. In particular, if
\begin{align}\label{e-multbound}
\sup_{t\in I} \| F(x,t) \|_{L^2_x} \leq A \; \; \; \text{ and } \; \; \; 
\sup_{t \in I} \| \partial_t F(x,t) \|_{L^2_x} \leq a   
\end{align}
then
\begin{align}
\| \sup_{t \in I} |F(x,t)| \|_{L^2_x} \leq \mathbf{C} \cdot \big( A +  (A a |I|)^{1/2}  \big) \nonumber 
\end{align}
\end{lemm}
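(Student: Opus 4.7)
The plan is to prove the pointwise bound first via the fundamental theorem of calculus applied to $|F(x,t)|^2$, then deduce the $L^2_x$ estimate by two applications of the Cauchy--Schwarz inequality and Fubini.

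For the pointwise inequality, fix $t_I \in I$ and, for any $t \in I$, write
\[ |F(x,t)|^2 = |F(x,t_I)|^2 + 2 \operatorname{Re} \int_{t_I}^t \overline{F(x,u)} \cdot \partial_u F(x,u) \ du,\]
which is legitimate since $F(x,\cdot)$ is absolutely continuous for a.e.\ $x$. Applying Cauchy--Schwarz to the integral, we obtain
\[ |F(x,t)|^2 \leq |F(x,t_I)|^2 + 2 \Bigl(\int_I |F(x,u)|^2 \ du\Bigr)^{1/2} \Bigl(\int_I |\partial_u F(x,u)|^2 \ du\Bigr)^{1/2}.\]
Taking the supremum over $t \in I$ and using the elementary inequality $\sqrt{a+b} \leq \sqrt{a} + \sqrt{b}$ yields the claimed pointwise bound, with a constant $\mathbf{C} = \sqrt{2}$.

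To pass to the $L^2_x$ estimate, I take $L^2_x$-norms on both sides of the pointwise bound and apply Minkowski's inequality. The contribution from $\|F(x,t_I)\|_{L^2_x}$ is at most $A$ by hypothesis \eqref{e-multbound}. For the remaining term,
\[ \Bigl\| \Bigl(\int_I |F(x,u)|^2 \ du\Bigr)^{1/4} \Bigl(\int_I |\partial_u F(x,u)|^2 \ du\Bigr)^{1/4} \Bigr\|_{L^2_x}^2 = \int \Bigl(\int_I |F|^2\Bigr)^{1/2} \Bigl(\int_I |\partial_u F|^2\Bigr)^{1/2} \ dx, \]
and by Cauchy--Schwarz in $x$ followed by Fubini this is bounded by
\[ \Bigl(\int_I \|F(\cdot,u)\|_{L^2_x}^2 \ du\Bigr)^{1/2} \Bigl(\int_I \|\partial_u F(\cdot,u)\|_{L^2_x}^2 \ du\Bigr)^{1/2} \leq A\,|I|^{1/2} \cdot a\,|I|^{1/2} = Aa|I|, \]
using \eqref{e-multbound} once more. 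Taking square roots and combining gives $\|\sup_{t\in I}|F(x,t)|\|_{L^2_x} \leq \mathbf{C}(A + (Aa|I|)^{1/2})$, as desired.

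There is no serious obstacle; the only subtlety is recognizing that the correct object to differentiate is $|F|^2$ rather than $|F|$ (so that absolute continuity of $F(x,\cdot)$ suffices and no additional regularity on $|F|$ is needed), and this is precisely what produces the geometric mean of the two $L^2_u$ quantities on the right-hand side.
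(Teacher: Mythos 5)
Your proof is correct and follows essentially the same route as the paper: differentiate $|F(x,\cdot)|^2$ via absolute continuity, apply Cauchy--Schwarz in $t$, take the supremum, and then integrate in $x$ with one more Cauchy--Schwarz (plus Fubini) to convert the $t$-averages into $A$ and $a$. The only difference is cosmetic (writing the derivative as $2\operatorname{Re}\,\overline{F}\,\partial_u F$ to handle complex values explicitly), which is a fine touch but not a different argument.
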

\begin{proof}
For any $t \in I$, we may bound
\begin{align}
F(x,t)^2 = F(x,t_I)^2 + \int_{[t_I,t]} \partial_s \big( F(x,s)^2 \big) \ ds, \nonumber 
\end{align}
so
\begin{align}\label{e-sq}
|F(x,t)|^2 &\leq |F(x,t_I)|^2 + 2 \int_{I} |F(x,t)| \cdot |\partial_t F(x,t)| \ dt   \nonumber \\
& \qquad \leq |F(x,t_I)|^2 + 2 \left( \int_I |F(x,t)|^2 \ dt\right)^{1/2} \cdot \left( \int_I |\partial_t F(x,t)|^2 \ dt\right)^{1/2} 
\end{align}
The right-hand side of \eqref{e-sq} is independent of $t$, so we can take the supremum in $t$ over the left-hand side of \eqref{e-sq} and then integrate in $x$, applying Cauchy--Schwarz to handle the $L^2$-based $t$-averages.
\end{proof}

While Lemma \ref{2-l:SOBEMB} is very cheap, it is surprisingly robust, and is very useful in studying maximal multiplier operators of the form 
\[ \sup_t |( \hat{f} \cdot \mathbf{m}(t \cdot) )^{\vee}|\]
for bounded $m \in \mathcal{C}^1(\mathbb{R} \smallsetminus \{0\})$, see Lemma $3$ of \textcite[Lemma 3]{[4]}.

It is helpful to discretize this argument: for each $v \geq 1$, define
\begin{align*}
\Lambda_v \coloneqq  \big( 2^{-v} \cdot \mathbb{Z} \big) \cap I \end{align*}
and define the parent of $t \in \Lambda_v$, $\rho(t)\in \Lambda_{v-1}$ to be the minimal element so that 
\begin{align*}
B(t,2^{-v}) \cap B(\rho(t),2^{1-v}) \neq \emptyset, \; \; \; \; \; \; B(x,s) \coloneqq  \{ y : |x-y| < s\}.
\end{align*}

Given $x$-a.e. continuity in $t \mapsto F(x,t)$, to study $F_I$, it suffices to bound
\begin{align*}
\sup_{t \ \in \ \bigcup_{v \geq 1} \Lambda_v} |F(x,t)|;
\end{align*}
by monotone convergence, it suffices to estimate, uniformly in finite subsets $T \subset \bigcup_{v \geq 1} \Lambda_v$,
\begin{align*}
F_T(x) \coloneqq  \sup_{t \in T  } |F(x,t)|.
\end{align*}

To do so, for each $t \in T$, we may telescope
\begin{align*}
t = (t - \rho(t)) + (\rho(t) - \rho^2(t)) + \dots + \rho_0(t)
\end{align*}
where $\rho^j$ is the $j$th composition of $\rho$, and $\rho_0(t)$ is the appropriate composition so that $\rho_0(t) \in \Lambda_{v_0}$ for some $v_0$ to be determined below.

Note that the number of increments required to arrive at a representative $\rho_0 \in \Lambda_{v_0}$ is uniformly bounded, since $T$ is finite. We bound
\begin{align*}
F_T(x) &\leq \sup_{t \in \Lambda_{v_0}} |F(x,t)| + \sum_{v > v_0} \sup_{t \in \Lambda_{v}} | F(x,t) - F(x,\rho(t))| \\
& \qquad \leq \Big( \sum_{t \in \Lambda_{v_0}} |F(x,t)|^2 \Big)^{1/2} + \sum_{v > v_0} \Big( \sum_{t \in \Lambda_{v} } | F(x,t) - F(x,\varrho(t))|^2 \Big)^{1/2},
\end{align*}
noting that all sums are in fact finite, 
and take $L^2_x$-norms, before optimizing over $v_0 \geq 0$ to derive the desired upper bound:
\[ A \cdot |\Lambda_{v_0}|^{1/2} + a \cdot \sum_{v > v_0} |\Lambda_v|^{1/2} \cdot 2^{-v} \leq \mathbf{C} \cdot \big(  A + (A a |I| )^{1/2} \big);\]
see \eqref{e-multbound}.

In both of these arguments, we relied upon smoothness in the map $t \mapsto F(x,t)$. Really, though, we were relying on decaying contributions from 
\begin{align}\label{e-sm0}
\Lambda_v \ni t \mapsto |F(x,t) - F(x,\rho(t))|
\end{align}
as $v$ grows, and the controlled \emph{entropy estimate}
\[ |\Lambda_v| \leq  \mathbf{C} \cdot 2^v \cdot |I|; \]
from the metric perspective, this estimate is measuring the extent to which elements in $I$ adhere to each other ---\enquote{stick together}--- at scales $2^{-v}$. Estimates like
\[ \sup_{t \in I} \| \partial_t F(x,t) \|_{L^2_x} \leq a \]
allow us to capture the smallness in \eqref{e-sm0} in an $L^2$-average sense. But, we may also pointwise approximate $\{ F(x,t) : t \in I \}$
 more directly using a similar telescoping mechanism.

For $T$ as above, consider the set 
\begin{align}
X(x) \coloneqq  X_T(x) \coloneqq  \big\{ F(x,t) : t \in T \big\},
\end{align}
and for each $v$ so that $2^{-v} \leq 2 \cdot \text{diam}(X(x)) $, define $\Lambda_v(x) \subset T$ to be a collection of times $t$ so that
\begin{align}
X(x) \subset \bigcup_{t \in \Lambda_v(x)} B\big( F(x,t), 2^{-v} \big)
\end{align}
subject to the constraint that $|\Lambda_v(x)|$ is minimal; the cardinality is essentially the $2^{-v}$-\emph{entropy} of the set.

Now, let $V$ be so large that each element of $T$ is separated by $> 2^{1-V}$, so that $T = \Lambda_V(x)$. And define the parent of $t \in \Lambda_v(x)$, $\varrho(t)\in \Lambda_{v-1}(x)$ to be the minimal element so that 
\begin{align}\label{e-int}
B\big( F(x,t), 2^{-v} \big) \cap B \big( F(x,\varrho(t)),2^{1-v} \big) \neq \emptyset.
\end{align}
For any $s \in T$, we may similarly bound 
\begin{align}\label{2-e:entexp}
F_T(x) &\leq |F(x,s)|  + \sum_{v} \sup_{t \in \Lambda_v(x)} |F(x,t) - F(x,\varrho(t))|  \nonumber \\
& \qquad \leq |F(x,s)|  + \sum_{v} \Big( \sum_{t \in \Lambda_v(x)} |F(x,t) - F(x,\varrho(t))|^2 \Big)^{1/2}
  \\
& \qquad \qquad \leq |F(x,s)| + \mathbf{C} \cdot \sum_v 2^{-v} \cdot |\Lambda_v(x)|^{1/2}, \nonumber 
\end{align}
as we may bound
\[ |F(x,t) - F(x,\rho'(t))| < 2^{-v} + 2^{1-v} < 2^{2-v} \]
for each $t \in \Lambda_v(x)$ by \eqref{e-int}.
It is convenient to change perspectives and bound 
\begin{align} \label{e-ent}
|\Lambda_v(x)| \leq N_{2^{-v}}(x)
\end{align}
where
\begin{multline*}
N_\lambda(x) \coloneqq  \sup \Bigl\{ K : \text{ there exists a sequence of
  times }\Bigr. \\ \Bigl.  t_0 < t_1 < \dots < t_K :  |F(x,t_i) - F(x,t_{i-1})| > \lambda \Bigr\}  \end{multline*}
is a so-called (greedy) \emph{jump-counting function} at altitude $\lambda > 0$, which measures the extent to which $\{ F(x,t) : t \}$ \enquote{stick together} at the scale $\lambda$:
\begin{align*}
N_\lambda(x) < \infty \text{ for all $\lambda > 0$} &\iff \{ F(x,t) : t \} \text{ converges} \\
& \qquad \iff \{ F(x,t) : t \} \text{ \enquote{stick together} at all scales}. \end{align*}
To establish \eqref{e-ent}, one majorizes the left hand side and minorizes the right hand by the $2^{1-v}$-\emph{entropy} of the set: the size of the largest set of $2^{1-v}$-separated points inside of $\{ F(x,t) : t \in I \}$.

The reverse bound 
\[ N_{2^{-v}}(x) \leq |\Lambda_{v+1}(x)|  \]
is simpler, so there is nothing lost quantitatively from this change, as indeed
\[ \sum_v 2^{-v} \cdot |\Lambda_v(x)|^{1/2} \leq \sum_v 2^{-v} \cdot N_{2^{-v}}(x)^{1/2} \leq \mathbf{C} \cdot \sum_v 2^{-v} \cdot |\Lambda_v(x)|^{1/2}. \]

In many special examples, one is able to prove a uniform bound
\begin{align}\label{e-unif} \sup_v \| 2^{-v} \cdot N_{2^{-v}}^{1/2} \|_{L^2} \leq \mathbf{C} \cdot A, \end{align}
which says that in an $L^2$-averaged sense
\begin{align*}
N_{2^{-v}} \; \; \; \textrm{``}\leq \textrm{''} \; \; \; \mathbf{C} \cdot A^2 \cdot 2^{2v} 
\end{align*}
i.e.\ that it costs a quadratically growing price to cover the collection of data $\{ F(x,t) : t \}$ by balls of a given radius. The following examples are representative.

\subsubsection*{Entropic Example One}
Consider the (discrete-time) averaging operators,
\begin{align}\label{e:cond} F(x,t) = \mathbb{E}_k f(x) \cdot \mathbf{1}_{[2^k,2^{k+1})}(t)
\end{align}
where 
\begin{align}
\mathbb{E}_k f (x) \coloneqq  \sum_{|I| = 2^k \text{ dyadic}} \big( \frac{1}{|I|} \int_I f(t) \ dt \big) \cdot \mathbf{1}_I(x)
\end{align}
is the conditional expectation operator, 
projecting onto the $\sigma$-algebra generated by the dyadic intervals $\{ 2^k \cdot [n,n+1) : n \in \mathbb{Z} \}$. The \enquote{stopping-time} structure embedded in the definition of $N_{2^{-v}}$ allows one to neatly employ methods from dyadic harmonic analysis -- secretly, martingale techniques -- to establish \eqref{e-unif}.
\subsubsection*{Entropic Example Two}
To the extent that 
\[ \mathbb{E}_k f \; \; \; \textrm{``}=\textrm{''} \; \; \; \chi_k*f, \]
in that both operators \enquote{blur} at spatial scales $2^{k}$, discarding \enquote{fine scale} information below this threshold, and preserving \enquote{coarse scale} properties that can be detected above this spatial threshold, one can combine a {square function argument} with further orthogonality arguments, in particular the quantitative bound
\begin{align}\label{e-orth} \| \mathbb{E}_k \psi_l - \chi_k*\psi_l \|_{L^2(\mathbb{R})} \leq \mathbf{C} \cdot 2^{- |k-l|/2} \cdot \| \psi_l \|_{L^2(\mathbb{R})}, \; \;\; \psi_l \coloneqq  \chi_l - \chi_{l-1},\end{align}
to extend \eqref{e-unif} to the case where
\begin{align}\label{e-condavg} F(x,t) = 
f*\chi_k(x) \cdot \mathbf{1}_{[2^k,2^{k+1})}(t), \end{align}
and similarly with $\chi$ replaced with any other Schwartz function with $\hat{\chi}(0)  =1$. These ideas first appeared in \cite{[20]}.

\subsection{The Jump-Counting Approach to Entropy}
While the uniform estimate \eqref{e-unif} is a priori insufficient to control the full supremum over $t \in T$, this entropic argument yields a remarkable strengthening over the trivial estimate
\[ \| F_T \|_{L^2_x} \leq \| S_T \|_{L^2_x} \leq |T|^{1/2} \cdot A,\]
where we set
\[ S_T(x)^2 \coloneqq  \sum_{t \in T} |F(x,t)|^2. \]
In particular, for any $t \in T$,
\begin{align}\label{2-e:jumps}
F_T(x) &\leq |F(x,t)| + \mathbf{C} \cdot \sum_v 2^{-v}  \cdot N_{2^{-v}}(x)^{1/2} 
\nonumber \\
& \qquad \leq |F(x,t)| + \frac{S_T(x)}{|T|^{1/2}} + \sum_{v: \frac{S(x)}{|T|^{1/2}} \leq 2^{-v} \leq 2 \cdot S(x) } 2^{-v}  \cdot N_{2^{-v}}(x)^{1/2}
\end{align}
so that, essentialy, the uniform bound \eqref{e-unif} implies\footnote{There
  is a natural comparison between this estimate and the abstract Hilbert space \emph{Rademacher--Menshov} inequality, which also states that 
\begin{align}\label{e-logloss} \| \sup_{n \leq N} |F(x,n)| \|_{L^2} \leq \mathbf{C} \cdot \log N \cdot A 
\end{align}
 under orthogonality constrains on the functions $\{  F(\cdot,n) : n \}$. The analogy is at the level of proof and is that of Lebesgue integration to Riemann integration: the entropy bound organizes the data $ \{ F(x,n) : n \} $ according to its image, while the Rademacher--Menshov inequality is proven by analogously organizing the data according to the domain of the time parameter $n \in [N]$.}
\begin{align}\label{e-genRM} \| F_T \|_{L^2_x} \; \; \; \textrm{``}\leq\textrm{''} \; \; \; \mathbf{C} \cdot \log |T| \cdot A.
\end{align}
In point of fact, as we will see below, \eqref{e-genRM} often holds with a $\log^2 |T|$ prefactor.
\subsection{Introduction to Variation}
As the difficulty with the heuristic justification for \eqref{e-logloss} shows, see \eqref{2-e:jumps}, a major problem is that, in general, we cannot expect a uniform bound on 
\[ x \mapsto \sup_v \ 2^{-v} \cdot N_{2^{-v}}(x)^{1/2},\]
see \cite{[22]} or \cite{[33]}.

To get around this issue, one instead sacrifices the power $1/2 \to 1/r, \ r > 2$ and introduces the so-called \emph{$r$-variation} of $\{ F(x,t) : t \in I\}$
\begin{align}
\mathcal{V}^r(x) \coloneqq  \mathcal{V}^r_F(x) \coloneqq  \sup \big( \sum_i |F(x,t_i) - F(x,t_{i-1})|^r \big)^{1/r},
\end{align}
where the supremum runs over all finite increasing subsequences inside of $I$. Unlike the jump counting function, the $r$-variation operators crucially satisfies a triangle inequality,
\[ \mathcal{V}^r_{F+G} \leq \mathcal{V}^r_F + \mathcal{V}^r_G, \]
and one may bound
\[ \sup_v \ 2^{-v} \cdot  N_{2^{-v}}(x)^{1/r} \leq \mathcal{V}^r(x), \]
which is important, as the $\mathcal{V}^r$ operators often admit a strong $L^2$-theory. In particular, if $|T| = N$, so that $N_\lambda \leq N$ for all $\lambda$, we may bound
\begin{equation}\label{e-interptrick}
2^{-v} \cdot N_{2^{-v}}^{1/2} \leq 2^{-v}  \cdot N_{2^{-v}}^{1/r} \cdot N_{2^{-v}}^{1/2 - 1/r} \leq N^{1/2 - 1/r} \cdot  \mathcal{V}^r 
\end{equation}
and if we set $r = 2 + \frac{\mathbf{c}}{\log N}$, then we eliminate the pre-factor of $N^{1/2-1/r}$ and end up with the bound
\begin{align*}
2^{-v} \cdot N_{2^{-v}}^{1/2} \leq \mathbf{C} \cdot \mathcal{V}^r, \; \; \;  \; \; \; r = 2 + \frac{\mathbf{c}}{\log N}.
\end{align*}
Substituting into \eqref{2-e:jumps}, we bound, for any $t \in T$
\[ F_T(x) \leq |F(x,t)| + \frac{S_T(x)}{N^{1/2}} + \sum_{v : \frac{S_T(x)}{N^{1/2}} \leq 2^{-v} \leq S_T(x)} \mathcal{V}^r(x) \leq |F(x,t)| + \frac{S_T(x)}{N^{1/2}} + \log N \cdot \mathcal{V}^r(x),\]
which says that
\[ \| F_T \|_{L^2} \leq \mathbf{C} \cdot \big( A + \log N \cdot \| \mathcal{V}^r \|_{L^2} \big), \; \; \; \; \; \; r = 2 + \frac{\mathbf{c}}{\log N}, \]
so control over the $r$-variation operators leads, essentially, to \eqref{e-genRM}.

The relevant estimates for $\mathcal{V}^r$ derive, in many cases, from the following inequality, classically used as a convergence result in martingale theory \cite{[26]}; see \cite{[21]} for a discussion, and \cite{[17]} or \cite{[32]} for more exotic examples.
\begin{prop} [L\'{e}pingle's Inequality, Special Case] \label{p-LEP}
The following estimate holds in the conditional expectation case \eqref{e:cond}:
\[ \| \mathcal{V}^r \|_{L^2(\mathbb{R})} \leq \mathbf{C} \cdot \frac{r}{r-2} \cdot A. \]
\end{prop}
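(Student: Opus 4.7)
The plan is the standard two-step strategy for L\'epingle-type inequalities: first establish a uniform $L^2$ \emph{jump inequality}
$$ \sup_{\lambda > 0} \, \| \lambda \cdot N_\lambda(F)^{1/2} \|_{L^2(\mathbb{R})} \leq \mathbf{C} \cdot \|f\|_{L^2(\mathbb{R})}, $$
and then convert it into the $r$-variation bound, with the factor $\frac{r}{r-2}$ emerging from a geometric summation over dyadic altitude scales.

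\emph{Step 1 (jump inequality via stopping times).} I would exploit the dyadic martingale structure of $(\mathbb{E}_k f)_{k \in \mathbb{Z}}$. Fix $\lambda > 0$ and $x \in \mathbb{R}$, and define greedy stopping times $\tau_0(x) < \tau_1(x) < \dots$ by letting $\tau_0$ be a fixed initial scale and setting $\tau_{j+1}(x)$ to be the smallest $k > \tau_j(x)$ with $|\mathbb{E}_k f(x) - \mathbb{E}_{\tau_j(x)} f(x)| > \lambda$. Within each block $[\tau_j,\tau_{j+1})$ the map $k \mapsto \mathbb{E}_k f(x)$ stays inside an oscillation band of width $\leq 2\lambda$ around $\mathbb{E}_{\tau_j(x)} f(x)$, so any competing jump-realizing sequence can accumulate only $O(1)$ extra $\lambda$-jumps per block, giving the pointwise bound
$$ \lambda^2 \cdot N_\lambda(x) \leq \mathbf{C} \cdot \sum_{j \geq 0} \bigl| \mathbb{E}_{\tau_{j+1}(x)} f(x) - \mathbb{E}_{\tau_j(x)} f(x) \bigr|^2. $$
Because each $\tau_j$ is measurable with respect to the dyadic filtration, optional stopping preserves the orthogonality of the martingale increments on the right, and Pythagoras then forces $\int \lambda^2 \cdot N_\lambda(x) \, dx \leq \mathbf{C} \cdot \|f\|_{L^2}^2$, which is the jump inequality.

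\emph{Step 2 (passage to $r$-variation).} I would start from the deterministic pointwise bound
$$ \mathcal{V}^r(x)^r \leq \mathbf{C} \sum_{v \in \mathbb{Z}} 2^{-vr} \cdot N_{2^{-v}}(x), $$
obtained by grouping the jumps of any admissible time-sequence into the dyadic size ranges $[2^{-v-1}, 2^{-v})$. Writing $\|\mathcal{V}^r\|_{L^2}^2 = \int (\mathcal{V}^r)^2 \, dx$ and using that $t \mapsto t^{2/r}$ is subadditive (since $r > 2$), we obtain
$$\|\mathcal{V}^r\|_{L^2}^2 \leq \mathbf{C} \sum_v 2^{-2v} \int N_{2^{-v}}(x)^{2/r} \, dx.$$
The key is then to truncate the sum over $v$ at each point: the range of altitudes $\lambda$ for which $N_\lambda(x) \geq 1$ is bounded above by the martingale square function $S(x)^2 \coloneqq \sum_k |\mathbb{E}_{k+1} f(x) - \mathbb{E}_k f(x)|^2$ (with $\|S\|_{L^2} \leq \|f\|_{L^2}$ by orthogonality of dyadic differences). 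Combining Step~1 --- which gives $\int N_{2^{-v}} \, dx \leq A^2 \cdot 2^{2v}$ --- with H\"older on the exponent $2/r$ inside the truncated range, and summing the resulting geometric series in $v$, produces the claimed prefactor $\frac{r}{r-2}$.

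\emph{Main obstacle.} The delicate point is entirely in Step~2: recovering the sharp linear dependence $\frac{r}{r-2}$ as $r \downarrow 2$, as opposed to some higher power. A brute-force H\"older application on the exponent $2/r$ forfeits this sharpness because it ignores the pointwise truncation provided by the square function; getting the right constant demands either a Marcinkiewicz-style real interpolation between the jump inequality and the square-function bound, or a careful dyadic decomposition of the range $\{\mathbb{E}_k f(x)\}_k$ that makes the geometric series in $v$ converge with constant exactly linear in $(r-2)^{-1}$. Once that interpolation is in place, the $r/(r-2)$ factor is precisely the tail of the resulting series.
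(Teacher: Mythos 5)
The paper does not actually prove Proposition \ref{p-LEP}; it quotes it from the martingale literature (L\'epingle \cite{[26]}, see also \cite{[21]}), so the only question is whether your two-step argument would constitute a valid proof. Your Step~1 is fine: the greedy stopping-time construction, the comparability of the greedy count with $N_\lambda$, and orthogonality of the stopped increments give the fixed-$\lambda$ jump inequality $\sup_{\lambda>0}\|\lambda\,N_\lambda^{1/2}\|_{L^2}\leq \mathbf{C}\cdot A$; this is the standard argument.

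The genuine gap is in Step~2, and it is more serious than the issue of the sharp constant that you flag. Run your own computation to the end: H\"older with the support bound $|\{N_{2^{-v}}\geq 1\}|\leq \mathbf{C}\cdot A^2\cdot 2^{2v}$ (Chebyshev plus Doob) and the Step~1 bound $\int N_{2^{-v}}\leq A^2\cdot 2^{2v}$ give
\begin{align*}
\int N_{2^{-v}}^{2/r}\ dx \;\leq\; \Big(\int N_{2^{-v}}\Big)^{2/r}\cdot |\{N_{2^{-v}}\geq 1\}|^{1-2/r}\;\leq\; \mathbf{C}\cdot A^2\cdot 2^{2v},
\end{align*}
so every term of $\sum_v 2^{-2v}\int N_{2^{-v}}^{2/r}$ is of size $\approx A^2$ and the series diverges: there is no geometric decay in $v$ to sum, hence no bound of any strength, not merely a lossy constant. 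Nor can a pointwise truncation rescue this at the single exponent $p=2$: the square function (or $F_*$) only cuts off the range of \emph{large} altitudes, while the divergence comes from $v\to\infty$ (small jumps occur at all altitudes), and the quantity $\sup_v 2^{-v}N_{2^{-v}}(x)^{1/2}$ is in general \emph{not} bounded in $L^2$ --- this is exactly the failure of $2$-variation recorded in the paper's discussion with the references \cite{[22]}, \cite{[33]}. The missing idea is that the passage from jump inequalities to $r$-variation is not a same-exponent H\"older argument: one either (a) proves a distributional estimate for $\{\mathcal{V}^r>\alpha\}$ by splitting jumps at altitude $\approx\alpha$ (large jumps via $\{N_{c\alpha}\geq 1\}$, small jumps via Chebyshev and summing $2^{-v(r-2)}$, which is where $\frac{1}{r-2}$ enters), obtaining only weak-type bounds at a fixed exponent, and then interpolates between two Lebesgue exponents --- using that the martingale jump inequality also holds, e.g., at weak $(1,1)$ via Doob/stopping times --- to recover the strong $L^2$ bound with constant $O\big(\frac{r}{r-2}\big)$, as in \cite{[21]}; or (b) proves L\'epingle's inequality directly by one of the classical martingale arguments. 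As written, your Step~2 asserts the conclusion of this interpolation step rather than supplying it.
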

Proposition \ref{p-LEP} extends similarly to the case of convolution operators \eqref{e-condavg}: by combining a square function argument
\begin{align}\label{e-trivar} \mathcal{V}^r( \chi_k*f: k) &\leq \mathcal{V}^r( \mathbb{E}_k f : k) + \mathcal{V}^r( \chi_k*f - \mathbb{E}_k f:k) \nonumber \\ & \qquad \leq \mathcal{V}^r( \mathbb{E}_k f:k) + 2 \cdot \big( \sum_k |\chi_k*f - \mathbb{E}_k f|^2 \big)^{1/2} \end{align}
with the estimates \eqref{e-orth} introduced above, one can use orthogonality techniques and Proposition \ref{p-LEP} to bound both terms in \eqref{e-trivar}. Above, we define the discrete-time variation 
\[ \mathcal{V}^r(f_k : k)(x) \coloneqq  \sup \big( \sum_i |f_{k_i}(x) - f_{k_{i-1}}(x)|^r \big)^{1/r} \]
where the supremum runs over all finite subsequences $\{ k_i \}$.

While the $\mathcal{V}^r$ operators are more delicate than the pertaining maximal functions,
\[ F_T(x) \leq |F(x,t)| + \mathcal{V}^r(x) \]
for any $t \in T$, they are essentially of even strength, in that we have the following heuristic:
\begin{heuristic}\label{heur}
In either case \eqref{e:cond} or \eqref{e-condavg}, it is very hard for $\mathcal{V}^r$ to be large when both $F_I$ and the square function
\[ S_I(x) \coloneqq  \big( \sum_{k: 2^k \in I} |F(x,2^k) - F(x,2^{k+1})|^2 \big)^{1/2} \]
are small: $
\mathcal{V}^r \; \; \; \textrm{``}\approx\textrm{''} \; \; \; \frac{r}{r-2} \cdot (F_I + S_I) \; \; \; \textrm{``}\approx\textrm{''} \; \; \;  \frac{r}{r-2} \cdot F_I,  \ \frac{r}{r-2} \cdot S_I$.\footnote{The close link between maximal function and square function in either context \eqref{e:cond} or \eqref{e-condavg} is classical; see e.g.\ \textcite[\S 1]{[34]}. The relationship between $\mathcal{V}^r, F_I, S_I$ in the case \eqref{e:cond} is via the following good-$\lambda$ inequality:
\[ |\{ \mathcal{V}^r > \mathbf{C} \lambda, F_I,S_I \leq \gamma \lambda \}| \leq \mathbf{C} \cdot \big( \frac{r}{r-2} \big)^2 \cdot \gamma^2 \cdot |\{ \mathcal{V}^r > \lambda \} |, \; \;\; r > 2;\]
see \textcite[\S 3]{[23]}.
}
\end{heuristic}
Finally, and significantly, given our vector-valued perspective on studying
\[ \{ f_{\theta_1},\dots,f_{\theta_N} \}, \]
see \eqref{e-vv}, we observe that just as do the maximal function and square function, the $\mathcal{V}^r$ operators interact well in the vector-valued setting: for sequence-space valued functions $\vec{F} = (F_1,F_2,\dots)$
\begin{align}\label{e-vrvec}
\mathcal{V}^r_{\vec{F}}(x) \coloneqq  \sup \big( \sum_i \| F_n(x,t_i) - F_n(x,t_{i-1}) \|_{\ell^2_n}^r \big)^{1/r} \leq \| \mathcal{V}^r_{F_n}(x) \|_{\ell^2_n},
\end{align}
by Minkowski's inequality for sequence spaces (as $r > 2$), where the supremum runs over finite increasing subsequences of $\{t_i\}$.\\

With this section in mind, we begin to see how Bourgain developed his argument.

\section{The Argument Takes Shape}\label{s-shape}
Bourgain's task was to establish \eqref{e-local}, where we are only thinking about the case where $2^k$ is very large relative to $|I| = \mathbf{C}$. By monotone convergence, we can restrict to finitely many scales $k \in T \subset \mathbb{N} \cap [\mathbf{C},\infty)$. We focus on the case of a single interval.

Guided by our heuristic analysis, we let $x_I \in I$ be a point to be determined later, and seek to bound
\[ \| \sup_k | \sum_{n\leq N} e(\theta_n x) \cdot \chi_k*f_{\theta_n}(x_I)| \|_{L^2_x(I)}.
\]
As discussed above -- we are essentially forced to use the entropic approach. Specifically, we set
\begin{align}\label{e-ftheta} X(x_I) \coloneqq  \{ \chi_k*\vec{f_{\Theta}}(x_I) \coloneqq  \big( \chi_k*f_{\theta_1}(x_I),\dots,\chi_k*f_{\theta_N}(x_I) \big) : k \} \end{align}
and let $\vec{N}_{\lambda}$ denote the appropriate jump-counting function at altitude $\lambda$ with respect to the sequence space norm, $\ell^2([N])$, 
\begin{align*}
\vec{N}_\lambda(x) \coloneqq  \sup \Big\{ K &: \text{ there exists a sequence of times }  \mathbf{C} \leq k_0 < k_1 < \dots < k_K :\\
& \qquad 
\| \chi_{k_i}*f_{\theta_n}(x) - \chi_{k_{i-1}}*f_{\theta_n}(x) \|_{\ell^2_{n \in [N]}} > \lambda \Big \}.
\end{align*}
By arguing as above we can bound 
\begin{multline}\label{e-L2close}
\| \sup_k | \sum_{n\leq N} e(\theta_n x) \cdot \chi_k*f_{\theta_n}(x_I)| \|_{L^2_x(I)} \\  \leq 
| \Xi_{k_0} *f(x_I)| \cdot |I|^{1/2}  +
\sum_v \| \max_{k \in \Lambda_v(x_I)} |\sum_{n \leq N} e(\theta_n x) \cdot \big( \chi_k - \chi_{\varrho(k)} \big) *f_{\theta_n}(x_I)| \|_{L^2_x(I)}
\end{multline}
for any $k_0 \geq \mathbf{C}$, see \eqref{e-Xi}. The first term is of a simpler nature, so we will temporarily suppress it; and for each individual $v$ we may bound
\begin{align}\label{e-L2bound}
&\| \max_{k \in \Lambda_v(x_I)} |\sum_{n \leq N} e(\theta_n x) \cdot \big( \chi_k - \chi_{\varrho(k)} \big) *f_{\theta_n}(x_I)| \|_{L^2_x(I)} \nonumber \\
& \qquad 
 \leq \| \big( \sum_{k \in \Lambda_v(x_I)} |\sum_{n\leq N} e(\theta_n x) \cdot \big( \chi_k - \chi_{\varrho(k)} \big) *f_{\theta_n}(x_I)|^2 \big)^{1/2} \|_{L^2_x(I)} \nonumber \\
& \qquad \qquad \leq \big( \sum_{k \in \Lambda_v(x_I)} \| \sum_{n\leq N} e(\theta_n x) \cdot \big( \chi_k - \chi_{\varrho(k)} \big) *f_{\theta_n}(x_I) \|_{L^2_x(I)}^2 \big)^{1/2} \nonumber \\
& \qquad \qquad \qquad \leq \mathbf{C} \cdot 2^{-v} \cdot \vec{N}_{2^{-v}}(x)^{1/2} \cdot |I|^{1/2} 
\end{align}
by arguing as in \eqref{2-e:entexp}, applying \eqref{e-vv} to bound
\[ \| \sum_{n\leq N} e(\theta_n x) \cdot \big( \chi_k - \chi_{\varrho(k)} \big) *f_{\theta_n}(x_I) \|_{L^2_x(I)} \leq \mathbf{C} \cdot 2^{-v} \cdot |I|^{1/2} \]
uniformly for $k \in \Lambda_v(x_I)$. Above, $\{ \Lambda_v(x_I) : v \}$ are sets of times that are minimal with respect to the property that
\[ X(x_I) \subset \bigcup_{k \in \Lambda_v(x_I)} B_{\ell^2([N])}\big( \chi_k*\vec{f_{\Theta}}(x_I) , 2^{-v} \big),
\]
where $B_{\ell^2([N])}(\vec{v},r)$ is the ball of radius $r$ centered at $\vec{v} \in \ell^2([N])$ with respect to the sequence-space norm $\ell^2([N])$, and the parent function, $\varrho$, is as above.

The issue is the potential explosion
\[ \vec{N}_{2^{-v}}(x_I) \to \infty \text{ as } v \to \infty, \]
and there is no a priori way to rule out this enemy; if there were, there would be no logarithmic loss in \eqref{e-genRM}. The clever insight that Bourgain had that allowed him to push past this abstract issue was just Cauchy--Schwarz: we bound
\[ |\sum_{n\leq N} e(\theta_n x) \cdot \big( \chi_k - \chi_{\varrho(k)} \big) *f_{\theta_n}(x_I)|\leq N^{1/2} \cdot \big( \sum_{n \leq N} |\big( \chi_k - \chi_{\varrho(k)} \big) *f_{\theta_n}(x_I)|^2 \big)^{1/2} \leq \mathbf{C} \cdot N^{1/2} \cdot 2^{-v}
\]
uniformly for $k \in \Lambda_v(x_I)$, which yields the cheap bound
\begin{align*}
\| \max_{k \in \Lambda_v(x_I)} |\sum_{n\leq N} e(\theta_n x) \cdot \big( \chi_k - \chi_{\varrho(k)} \big) *f_{\theta_n}(x_I)| \|_{L^2_x(I)} \leq \mathbf{C} \cdot 2^{-v} \cdot N^{1/2} \cdot |I|^{1/2}.
\end{align*}
Altogether, Bourgain had obtained the bounds
\begin{multline*}
\| \max_{k \in \Lambda_v(x_I)} |\sum_{n\leq N} e(\theta_n x) \cdot \big( \chi_k - \chi_{\varrho(k)} \big) *f_{\theta_n}(x_I)| \|_{L^2_x(I)} \\ \leq \mathbf{C} \cdot 2^{-v} \cdot \min\{ \vec{N}_{2^{-v}}(x_I)^{1/2}, N^{1/2} \} \cdot |I|^{1/2},
\end{multline*}
see \eqref{e-L2bound}, which he cleverly interpolated, as per \eqref{e-interptrick},
\begin{align*} 2^{-v} \cdot \min\{ \vec{N}_{2^{-v}}(x_I)^{1/2}, N^{1/2} \} &\leq 
2^{-v} \cdot \vec{N}_{2^{-v}}(x_I)^{1/r} \cdot N^{1/2-1/r} \\
& \qquad \leq N^{1/2 - 1/r} \cdot \mathcal{V}^r_{\vec{f_{\Theta}}}(x_I) \leq \mathbf{C} \cdot \mathcal{V}^r_{\vec{f_{\Theta}}}(x_I) \; \; \; \; \; \; r = 2 + \frac{\mathbf{c}}{\log N}
\end{align*}
see \eqref{e-vrvec} and \eqref{e-ftheta},
obtaining a $v$-independent term on the right. Inserting this bound and arguing as in the heuristic analysis \eqref{2-e:jumps},
\begin{align}\label{e-atlast}
\eqref{e-L2close} &\leq \mathbf{C} \cdot \sum_{ v : 2^{-v} \leq \F_{\Theta}(x_I)} 2^{-v} \cdot \min\{ \vec{N}_{2^{-v}}(x_I)^{1/2}, N^{1/2} \} \cdot |I|^{1/2} \nonumber \\
&  \leq \mathbf{C} \sum_{v : 2^{-v} \leq \F_{\Theta}(x_I)/N^{1/2}} 2^{-v} \cdot N^{1/2} \cdot |I|^{1/2} + 
\mathbf{C} \sum_{v : \F_{\Theta}(x_I)/N^{1/2} \leq 2^{-v} \leq 2 \cdot \F_{\Theta}(x_I)} \mathcal{V}^{r}_{\vec{f_{\Theta}}}(x_I) \cdot |I|^{1/2} \nonumber \\
& \qquad \qquad \leq \mathbf{C} \cdot \F_{\Theta}(x_I) \cdot |I|^{1/2} + \mathbf{C} \cdot \log N \cdot \mathcal{V}^{r}_{\vec{f_{\Theta}}}(x_I) \cdot |I|^{1/2}, \; \; \; \; \; \; r = 2 + \frac{\mathbf{c}}{\log N}.
\end{align}
And, at last, after choosing $x_I$ carefully, we bound
\[ \| \M_{\Theta} f \|_{L^2(I)} \leq \mathbf{C} \cdot  \| \F_\Theta \|_{L^2(I)} + \mathbf{C} \cdot \log N  \cdot \| \mathcal{V}^r_{\vec{f_\Theta}} \|_{L^2(I)}, \]
which says that in a scale-$\mathbf{C}$, $L^2$-averaged sense, at all locations one has the following inequality
\[ \M_{\Theta} f \; \; \; \textrm{``}\leq\textrm{''} \; \; \; \F_\Theta + \log N \cdot \mathcal{V}^r_{\vec{f_\Theta}}, \; \; \; \; \; \;  r= 2 + \frac{\mathbf{c}}{\log N}.\]
In other words, up to logarithmic error, the vector valued maximal function and the vector-valued variation control $\M_{\Theta}$. And, square-summing over $\{ |I| = \mathbf{C} \}$, taking into account the related, convolution-based version of L\'{e}pingle's Inequality, Proposition \ref{p-LEP}, leads to the bound
\begin{align*}
\| \M_{\Theta} f \|_{L^2(\mathbb{R})} \leq \mathbf{C} \cdot( 1+ \log N \cdot \frac{r}{r - 2} ) \cdot \| f \|_{L^2(\mathbb{R})} \leq \mathbf{C} \cdot( 1+ \log^2 N) \cdot \| f\|_{L^2(\mathbb{R})},
\end{align*}
which satisfies \eqref{e-fexp}.

Guided by this intuition, we turn to the rigorous proof.

\section{The Proof of Proposition \ref{2-MAINPROP}, The Multi-Frequeny Maximal Inequality}

Motivated by our previous outline, we will seek to prove the following estimate:
\begin{align*}
\| \M_{\Theta} f \|_{L^2(\mathbb{R})} \leq \mathbf{C} \cdot \log^2 N \cdot \| f \|_{L^2(\mathbb{R})}.
\end{align*}
Accordingly we will restrict our attention only to scales $k \geq \mathbf{C} \log^2 N$, and just handle the complementary cases using a square function argument
\begin{multline*}
\| \sup_{\mathbf{C} \leq k \leq \mathbf{C} \cdot \log^2 N} |\Xi_k f|
\|_{L^2(\mathbb{R})} \leq \| \big( \sum_{\mathbf{C} \leq k \leq \mathbf{C}
  \cdot \log^2 N} |\Xi_k f|^2 \big)^{1/2} \|_{L^2(\mathbb{R})} \\ \leq \mathbf{C} \cdot \log N \cdot \sup_k \, \| \Xi_k f \|_{L^2(\mathbb{R})} 
  = \mathbf{C} \cdot \log N \cdot \sup_k \, \| \widehat{\Xi_k f}
  \|_{L^2(\mathbb{R})} \\ \leq \mathbf{C} \cdot \log N \cdot \sup_k \, \|\widehat{ \Xi_k } \|_{L^{\infty}(\mathbb{R})} \cdot \| \hat{f} \|_{L^2(\mathbb{R})} 
 \leq \mathbf{C} \cdot \log N \cdot \| f\|_{L^2(\mathbb{R})},
\end{multline*}
as
\[ \sup_k \sup_\xi |\widehat{\Xi_k}(\xi)| \leq 1, \]
see \eqref{e-fexp0}. We accordingly re-define $\F_{\Theta} = \F_{\Theta,\log^2 N}$, see \eqref{e-MAX}, and observe the inherited smoothness 
\begin{multline}\label{e-lip} 
|\F_{\Theta}(x) - \F_{\Theta}(y)|  \leq \mathbf{C} \cdot 
\sum_{|I| = 2^k \geq \log^2 N} \sum_{n \leq N} |\chi_k*f_{\theta_n}(x) - \chi_k*f_{\theta_n}(y) |  \\
  \leq \mathbf{C} \cdot N \cdot \sum_{k \geq \log^2 N} ( |x-y| \cdot 2^{-k} ) \cdot M_{\text{HL}} f(x) \leq \mathbf{C} \cdot |x-y| \cdot N^{-100} \cdot M_{\text{HL}} f(x), \end{multline}
(say), which says that $\F_{\Theta}$ is very smooth at scales $|I| = \mathbf{C}$. Above, we used the bound
\[ | \partial \chi_k(x)| \leq \mathbf{C} \cdot 2^{-k} \cdot 2^{-k} \cdot (1 + 2^{-k}\cdot |x|)^{-100}. \]
%
This excision of scales allows us to be a little less delicate than Bourgain in making rigorous the heuristic  
\eqref{2-e:TAYLOR}: whereas Bourgain used a so-called \emph{best constant argument}, we will just use the following estimate, which is effective for small intervals relative to the scales $k \geq \mathbf{C} \cdot \log^2 N$:
\begin{align*}
\sum_{n \leq N} e(\theta_nx) \cdot \chi_k*f_{\theta_n} (x) &= \sum_{n \leq N} e(\theta_nx) \cdot \chi_k*f_{\theta_n}(x_I) + O\big( \frac{{N} \cdot |I|}{2^k} \cdot M_{\text{HL}} f (x) \big) \\
& \qquad = \sum_{n \leq N} e(\theta_nx) \cdot \chi_k*f_{\theta_n}(x_I) + O\big(2^{-k/2} \cdot M_{\text{HL}} f(x) \big)
\end{align*}
for any $x_I \in I$, certainly provided that $|I| \leq N^{\mathbf{C}}$.

In particular, for any $x \in I$, with $|I| = \mathbf{C}$, we may bound
\begin{align*}
\sup_{k \geq \mathbf{C} \cdot \log^2 N} |\Xi_k f(x)|  \leq \sup_{k \geq \mathbf{C} \cdot \log^2 N} \big| \sum_{n \leq N} e(\theta_nx) \cdot \chi_k*f_n(x_I) \big| + O\big( \sum_{k \geq \mathbf{C} \cdot \log^2 N} 2^{-k/2} \cdot M_{\text{HL}} f(x) \big),
\end{align*} so that for each $I$ we may bound
\begin{align*}
\| \sup_{k \geq \mathbf{C} \cdot \log^2 N} |\Xi_k f(x)| \|_{L^2(I)} &\leq \mathbf{C} \cdot \min_{x_I \in I} \| \sup_{k \geq \mathbf{C} \cdot \log^2 N} \big| \sum_{n \leq N} e(\theta_nx) \cdot \chi_k*f_n(x_I) \big| \|_{L^2(I)} \\
& \qquad + \mathbf{C} \cdot N^{-100} \cdot \| M_{\text{HL}} f \|_{L^2(I)}
\end{align*}
(say). Temporarily dropping the term involving $M_{\text{HL}}$ as inessential, we consider the first term
\begin{align*}
\| \sup_{k \geq \mathbf{C} \cdot \log^2 N} \big| \sum_{n \leq N} e(\theta_nx) \cdot \chi_k*f_n(x_I) \big| \|_{L^2(I)},
\end{align*}
which we bound using the entropic approach, see \eqref{e-atlast},
\begin{align*}
&\| \sup_{k \geq \mathbf{C} \cdot \log^2 N} \big| \sum_{n \leq N} e(\theta_nx) \cdot \chi_k*f_n(x_I) \big| \|_{L^2(I)} \\& \qquad \leq \mathbf{C} \cdot \big( \F_\Theta(x_I) \cdot |I|^{1/2} + \log N \cdot \mathcal{V}^r_{\vec{f_{\Theta}}}(x_I) \cdot |I|^{1/2} \big), \\
& \qquad \qquad \leq \mathbf{C} \cdot \big( \| \F_\Theta \|_{L^2(I)} + \log N \cdot \| \mathcal{V}^r_{\vec{f_{\Theta}}} \|_{L^2(I)}  + N^{-100} \cdot \| M_{\text{HL}} f \|_{L^2(I)} \big), 
\end{align*}
after choosing $x_I$ to minimize $\mathcal{V}^r_{\vec{f_{\Theta}}}$ on $I$, and using the smoothness
\[ \F_{\Theta}(x_I) = \F_{\Theta}(x) + O(N^{-100}  \cdot M_{\text{HL}}f(x) )\]
to bound
\[ \F_{\Theta}(x_I) \cdot |I|^{1/2} = \| \F_{\Theta} \|_{L^2(I)} + O\big( N^{-100} \cdot \| M_{\text{HL}} f \|_{L^2(I)} \big).\]
In particular, we have bounded 
\begin{align*}
\| \sup_{k \geq \mathbf{C} \cdot \log^2 N} | \Xi_k f | \|_{L^2(I)} \leq \mathbf{C} \cdot \big( \| \F_\Theta \|_{L^2(I)} + \log N \cdot \| \mathcal{V}^r_{\vec{f_{\Theta}}} \|_{L^2(I)}  + N^{-100} \cdot \| M_{\text{HL}} f \|_{L^2(I)} \big)
\end{align*}
where $r = 2 + \frac{\mathbf{c}}{\log N}$, so square-summing over $|I| = \mathbf{C}$ yields, at last, the bound
\begin{align*}
&\| \M_{\Theta} f \|_{L^2(\mathbb{R})} \leq \big( \sum_{|I| = \mathbf{C}}  \| \sup_{ k \geq \mathbf{C} \cdot \log^2 N} |\Xi_k f| \|_{L^2(I)}^2 \big)^{1/2} + \mathbf{C} \cdot \log N \cdot \| f\|_{L^2(\mathbb{R})} \\
&  \leq \mathbf{C} \cdot \big( \sum_{|I| = \mathbf{C}}  \| \F_\Theta \|_{L^2(I)}^2 \big)^{1/2} + \mathbf{C} \cdot \log N \cdot \big( \sum_{|I|= \mathbf{C}} \| \mathcal{V}^r_{\vec{f_{\Theta}}} \|_{L^2(I)}^2 \big)^{1/2} \\
&  \qquad \qquad + \mathbf{C} \cdot N^{-100} \cdot \big( \sum_{|I| = \mathbf{C}} \| M_{\text{HL}} f \|_{L^2(I)}^2 \big)^{1/2} + \mathbf{C} \cdot \log N \cdot \| f\|_{L^2(\mathbb{R})} \\
&  \qquad \leq \mathbf{C} \cdot \big( \| \F_{\Theta} \|_{L^2(\mathbb{R})} + \log N \cdot \| \mathcal{V}^r_{\vec{f_{\Theta}}} \|_{L^2(\mathbb{R})} + N^{-100} \cdot \| M_{\text{HL}} f \|_{L^2(\mathbb{R})} + \log N \cdot \| f\|_{L^2(\mathbb{R})} \big) \\
&  \qquad \qquad \leq \mathbf{C} \cdot \log^2 N \cdot \| f \|_{L^2(\mathbb{R})},
\end{align*}
completing the proof.

\section{Contemporary Work}

Since Bourgain's work, the topic of pointwise convergence of ergodic averages along polynomial orbits was taken up and greatly advanced by Mariusz Mirek, Eli Stein, and their collaborators
\parencite{[27],[28],[29],[30],[31]}, building on breakthrough work of \cite{[19]}. The current state of affairs was established in \cite{[30]}:
\begin{theo}\label{t:M}
Suppose that $(X,\mu)$ is a $\sigma$-finite measure space, $\tau: X \to X$ is a measure-preserving transformation, and $P \in \mathbb{Z}[\cdot]$ is a polynomial with integer coefficients. Then for each $1 < p < \infty, \ r > 2$
\begin{align*} 
&\| \mathcal{V}^r \big( \frac{1}{N} \sum_{n=1}^N \tau^{P(n)} f :N \big) \|_{L^p(X)} +
\sup_{ \lambda > 0} \, \| \lambda \cdot N_{\lambda} \big( \frac{1}{N} \sum_{n=1}^N \tau^{P(n)} f :N \big)^{1/2} \|_{L^p(X)} \\
& \qquad \leq \mathbf{C}_p \cdot ( 1+ \frac{r}{r-2}) \cdot \| f\|_{L^p(X)}.\end{align*}
\end{theo}
In other words, from a quantitative perspective, the rate of convergence of the abstract averages
\[ \frac{1}{N} \sum_{n=1}^N \tau^{P(n)} f \]
is precisely that of our entropic examples!

The key to this argument was a combinatorial partitioning of $\mathbb{Q} \cap [0,1]$ into the so-called Ionescu--Wainger exhaustion of the rationals: one replaces
\[ \mathcal{R}_s  \longrightarrow \mathcal{U}_s, \]
see \eqref{e:Rs},
where $\{ \mathcal{U}_s : s \}$ form a disjoint partition of $\mathbb{Q} \cap [0,1]$ which captures many of the analytical properties of $\mathcal{R}_s$, namely
\begin{align}\label{e:ud}
\sup_{A/Q \in \mathcal{U}_s} |S(A/Q)| \leq \mathbf{C}_\epsilon \cdot 2^{(\epsilon - 1/d)s},
\end{align}
but admit much more favorable arithmetic statistics, which allows
for the approximation
\begin{align*}
K_k' \; \; \; \textrm{``}&=\textrm{''} \; \; \; \sum_{s \leq \mathbf{c} \cdot k} \sum_{A/Q \in \mathcal{U}_s} S(A/Q) \cdot \text{Mod}_{A/Q} ( \chi_{(d - \mathbf{c})k}*\phi'_{dk}) = : \sum_{s \leq \mathbf{c} \cdot k} L_{k,s}
\end{align*}
to hold in $\ell^p(\mathbb{Z})$ as well.

Although Bourgain's entropic argument is less effective in general on $\ell^p, p \neq 2$, by applying the Rademacher--Menshov inequality and arguing as in \cite{[5]}, one is able to establish e.g.\ the estimate
\begin{align*}
   \| \sup_k |L_{k,s}*f| \|_{\ell^p(\mathbb{Z})} \leq \mathbf{C}_\epsilon \cdot 2^{ \epsilon s} \cdot 2^{ -\mathbf{c}_{p,d} s} \cdot \| f\|_{\ell^p(\mathbb{Z})}, \; \; \; \; \; \; 1 < p < \infty, \ \mathbf{c}_{p,d} < 1/d,
\end{align*}
and similarly for the jump-counting formulation. The loss in the number of frequencies is sub-exponential in $s$, as in the case of Bourgain's maximal function on $\ell^2$; the gain of 
\[ 2^{ -\mathbf{c}_{p,d} s} \]
follows from appropriately interpolating \eqref{e:ud}.

This quantitative improvement over the sharpest estimates for $\sup_k |L_{k,s}'*f|$,
\[ \| \sup_k |L'_{k,s}*f| \|_{\ell^p(\mathbb{Z})}\leq \mathbf{C}_{\epsilon,p} \cdot 2^{(\epsilon + 1) s} \cdot 2^{ -\mathbf{c}_{p,d} s} \cdot \| f\|_{\ell^p(\mathbb{Z})}, \; \; \; 1 < p < \infty, \ \mathbf{c}_{p,d} < 1/d, \]
speaks to the flexibility of these arguments, which indeed extend to handle the case of the $r$-variation and jump-counting operators.


\end{document}
